\newtheorem{theorem}{Theorem}[section] 
\newtheorem{lemma}[theorem]{Lemma}
\newtheorem{remark}[theorem]{Remark} 
\newtheorem{definition}[theorem]{Definition}
\newcommand{\R} {{\mathbb R}} 
\newcommand{\Z} {{\mathbb Z}}
\newcommand{\pr} {{\mathbb P}}
\newcommand{\innt} {\operatorname{int}}
\newcommand{\dx}{D_{x}} 
\newcommand{\dy}{D_{y}}
\newcommand{\tax}{T_{x}}
\newcommand{\cf}{\mathcal{C}} 
\newcommand{\snp}[1]{S^{+}_{#1}} 
\newcommand{\snm}[1]{S^{-}_{#1}}
\newcommand{\fx}{F_{x}}    
\newcommand{\tl}{T_{\Lambda}}
\newcommand{\ml}{\mu_{\Lambda}}
\newcommand{\w}[1] {\tilde{#1}}
\newcommand{\wt} {\w{\tau}}
\newcommand{\ta} {T}
\newcommand{\fp} {f_{+}} 
\newcommand{\fm} {f_{-}} 
\newcommand{\fo} {f_{1}} 
\newcommand{\fin} {f_{0}}
\newcommand{\bt} {\bar{\theta}} 
\newcommand{\hth}{\hat{\theta}} 
\newcommand{\corr} {\cos^{-1} \left(\frac{\cos \theta}{r}\right)} 
\newcommand{\sta} {(s,\theta)}
\newcommand{\D} { 
\partial}
\begin{document}

\title{Track billiards}

\author[L. Bunimovich]{Leonid A. Bunimovich} 
\address{ABC Math Program and School of Mathematics \\ 
Georgia Institute of Technology \\ Atlanta, GA 30332, U.S.A.} 
\email{bunimovh@math.gatech.edu} 
\author[G. Del Magno]{Gianluigi Del Magno} 
\address{Max Planck Institute for the Physics of Complex Systems \\
01187 Dresden, Germany} 
\email{delmagno@mpipks-dresden.mpg.de}

\date{\today} \subjclass[2000]{37D50, 37D25} \keywords{Semi-focusing billiards, Hyperbolicity, Ergodicity} 
\begin{abstract}
	We study a class of planar billiards having the remarkable property that their phase space consists up to a set of zero measure of two invariant sets formed by orbits moving in opposite directions. The tables of these billiards are tubular neighborhoods of differentiable Jordan curves that are unions of finitely many segments and arcs of circles. We prove that under proper conditions on the segments and the arcs, the billiards considered have non-zero Lyapunov exponents almost everywhere. These results are then extended to a similar class of of 3-dimensional billiards. Finally, we find that for some subclasses of track billiards, the mechanism generating hyperbolicity is not the defocusing one that requires every infinitesimal beam of parallel rays to defocus after every reflection off of the focusing boundary.
\end{abstract}
                                
\maketitle     

\section{Introduction}
There are rather few examples of hyperbolic systems with several ergodic components, which are exactly described (for example, see \cite{w2,b3}). We study here a class of billiards whose phase space is up to a set of zero measure an union of two invariant sets consisting of orbits moving in opposite directions. The table of one of these billiards is a tubular neighborhood of differentiable Jordan curve that is a finite union of straight segments and arcs of circles. Since such a region looks somewhat like a track field, the billiards considered in this paper will be called track billiards. A simple example of a track billiard is obtained by cutting out a smaller stadium from a stadium (Fig. \ref{fig:tracks}(a)). 

In this paper, we prove that all the Lyapunov exponents of a track billiard are non-zero almost everywhere (hyperbolicity) provided that the segments and the arcs are sufficiently large, or that the segments and the width of the transverse section of the track are sufficiently large. We also generalize these results to 3-dimensional track billiards. There is no doubt that the hyperbolicity implies that the dynamics on each of the invariant sets formed by orbits moving in opposite directions is ergodic. This however will be the content of a forthcoming paper. 

It is worth pointing out that for some track billiards, the mechanism of hyperbolicity is not the defocusing one. This mechanism requires that after every reflection from the focusing part of the billiard boundary, a narrow beam of parallel rays must pass through a conjugate point, and become divergent before the next collision with the curved part of the boundary. Moreover, along a typical orbit, the average time of divergence along an orbit must exceed the average time of convergence. We found a class of track billiards that are hyperbolic, but do not satisfy the defocusing property. Namely, it is not true that every beam of parallel rays defocuses after reflecting off a focusing component and before the next reflection off a curved component of the boundary. To control such beams, we use the fact that the dynamics inside the curved part of a track is integrable.
 

Track billiards are also related to billiards in tubular regions, which model certain electronic devices in nanotechnology. Although, there are several works devoted to the study of the quantum properties of these billiards \cite{es,gj,cdfk,vpr}, we have found only a few works in the literature, which can give some insight on their classical properties \cite{hp,p}. Our results, may help fill in this gap.                                


The paper is organized as follows. In Section \ref{se:tracks}, we review some basic facts concerning billiard systems, introduce tracks billiards, and state the main result of this paper. The last part of Section \ref{se:tracks} contains some preliminary lemmas that are crucial in the proof of the hyperbolicity. In Section \ref{se:hyperbolicity}, we give the notions of focusing time and invariant cone field. Then, using a sort of generalized mirror formula for billiard trajectories crossing annular regions, we construct an eventually strictly invariant cone field for track billiards, whose existence implies hyperbolicity. Section \ref{se:hyperbolicity} contains also a discussion concerning the construction of the invariant cone field for a circular guide. Finally, in Section \ref{se:td}, the results obtained for 2-dimensional track billiards are extended to 3-dimensional track billiards.

\section{Track billiards}  
\label{se:tracks}

Let $ Q $ be a bounded domain of $ \R^{2} $ with piecewise differentiable boundary. The \emph{billiard in $ Q $} is the dynamical system arising from the motion of a point-particle inside $ Q $ obeying the following rules: the particle moves along straight lines at unit speed until it hits the boundary of $ Q $, at that moment, the particle gets reflected so that the angle of reflection equals the angle of incidence. 

\subsection{Definitions}   
\label{su:tracks}

The domain $ Q $ considered in this paper is a tubular neighborhood of a planar differentiable Jordan curve $ \gamma $ that is a finite union of segments and arcs of circles. Equivalently, we can say that $ Q $ is an union of finitely many building blocks of two types: circular guides and straight guides. A \emph{circular guide} is the region of an annulus with circles of radii $ r_{1}>r_{2}>0 $ contained inside a sector with central angle $ 0<\alpha<2\pi $ (see Fig. \ref{fig:guides}(a)). A \emph{straight guide} is simply a rectangle (see Fig. \ref{fig:guides}(b)). 
\begin{figure}[tb] 
	\centering \mbox{ \subfigure[Circular guide]{ 
	\includegraphics[width=.3
	\textwidth]{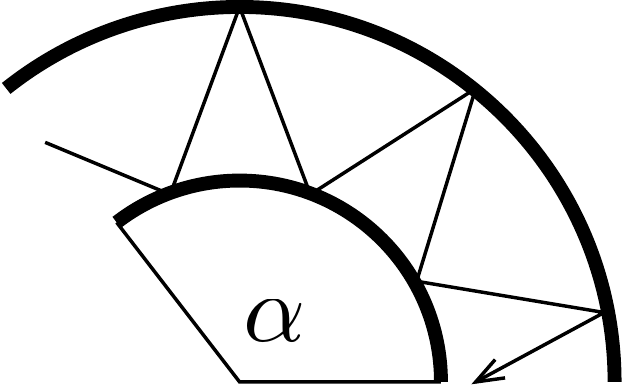}} \qquad \subfigure[Straight guide]{ 
	\includegraphics[width=.35
	\textwidth]{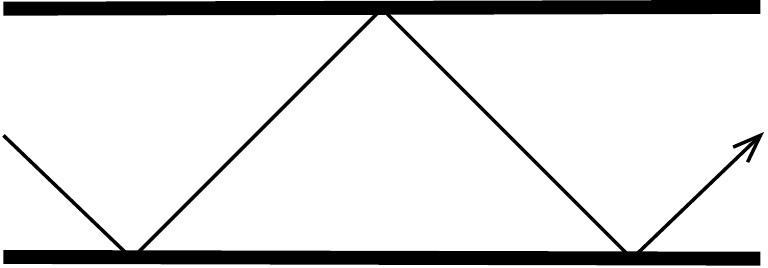}} } \caption{The two types of guides considered in this paper} \label{fig:guides} 
\end{figure}
The circular and straight guides must all have the same transverse width in order to fit together and form a domain $ Q $. Furthermore, we will always assume that any two circular guides of $ Q $ do not intersect (i.e., they are separated by at least one straight guide). Since $ Q $ resembles a track field, it is called a \emph{track}. Two examples of tracks are depicted in Fig. \ref{fig:tracks}. A billiard in $ Q $ is called a \emph{track billiard}. 
\begin{figure}[tb] 
	\centering \mbox{ \subfigure[]{ 
	\includegraphics[width=.4 
	\textwidth,height=3cm]{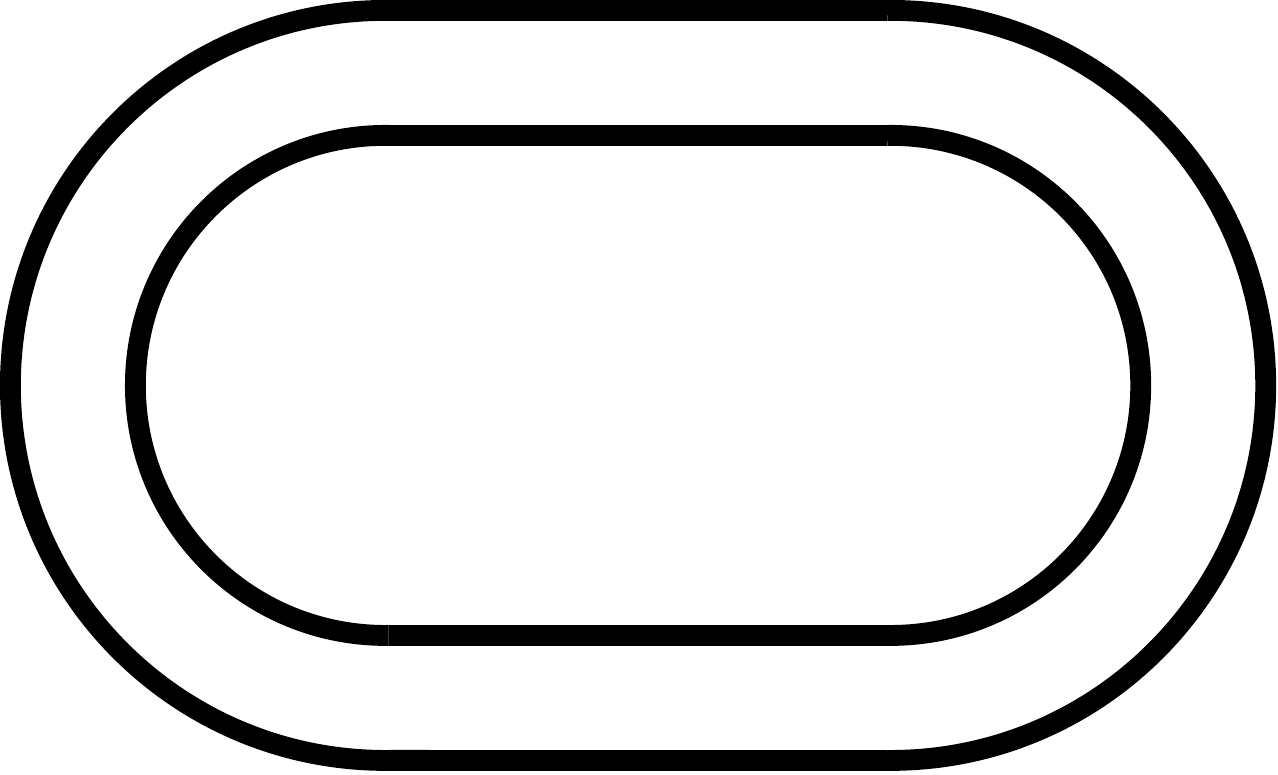}} \quad \subfigure[]{ 
	\includegraphics[width=.4
	\textwidth,height=3cm]{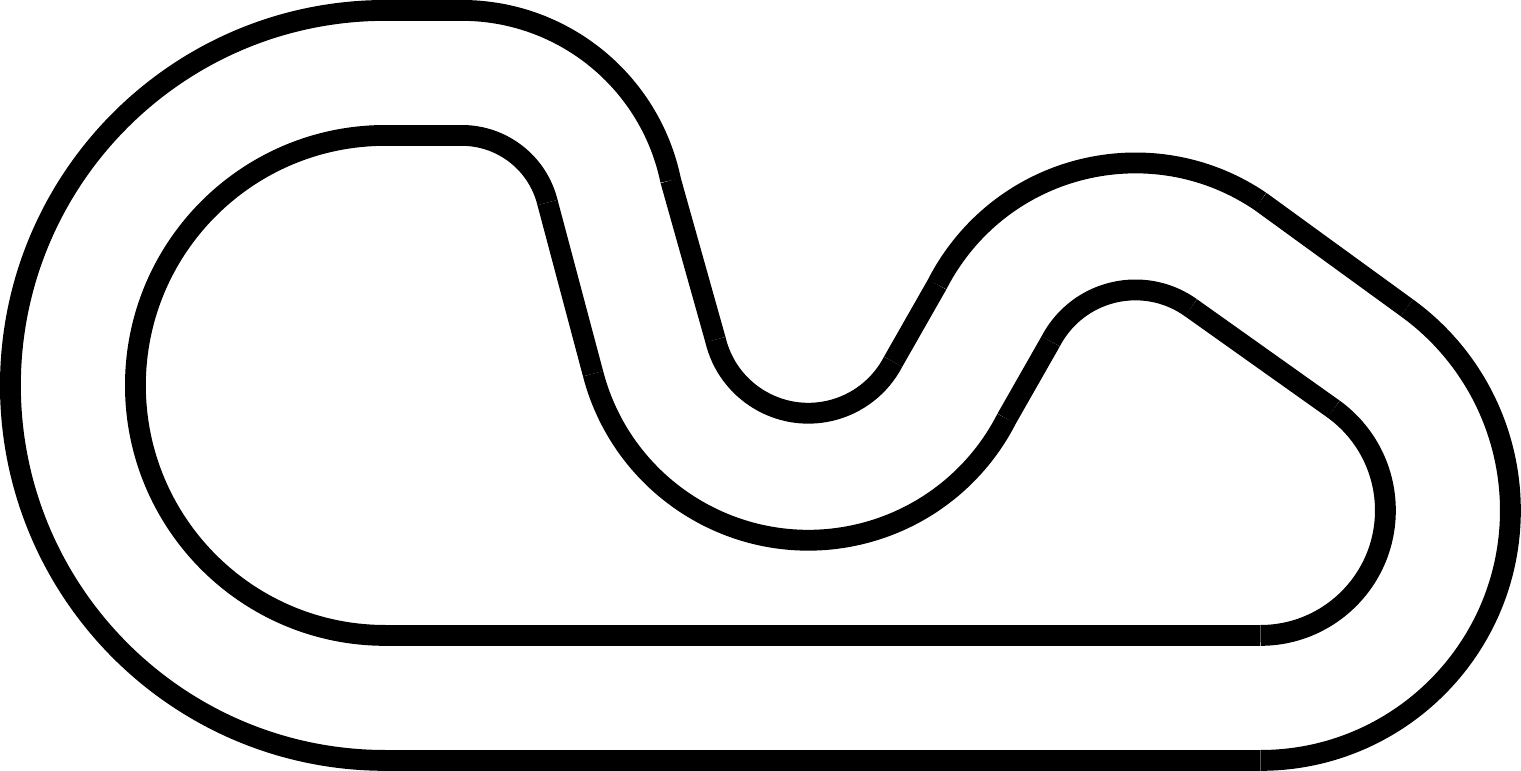}}} \caption{Two examples of tracks} \label{fig:tracks} 
\end{figure}  

For our purposes, the dynamics of a track billiard can be conveniently described by a discrete transformation called \emph{billiard map}, which is defined as follows. Let $ M $ be the set of all vectors $ (q,v) \in \ta_{1} \R^{2} $ such that $ q \in \D Q $ and $ \langle v,n(q)\rangle \geq 0 $, where $ n(q) $ is the normal vector to $ \D Q $ at $ q $ pointing inside $ Q $. Here $ \langle \cdot,\cdot \rangle $ is the standard dot product of $ \R^{2} $. The set $ M $ is easily seen to be a smooth manifold with boundary. Let $ \pi:M \to Q $ be the canonical projection defined by $ \pi(q,v)=q $ for $ (q,v) \in M $. If we view $ q $ and $ v $ as the position and the velocity of the particle after a collision with $ \D Q $, then $ M $ represents the collection of all possible post-collision states (collisions, for short) of the particle with $ \D Q $.  

Fix an orientation of the boundary $ \D Q $. A set of local coordinates for $ M $ is given by $ M \ni x \mapsto (s(x),\theta(x)) $, where $ s $ is the arclength parameter along the oriented boundary $ \D Q $, and $ 0 \le \theta \le \pi $ is the angle that the velocity of the particle forms with the oriented tangent of $ \D Q $. To specify an element $ x \in M $, we will use indifferently the notations $ x=(q,v) $ or $ x=\sta $. We endow $ M $ with the Riemannian metric $ ds^{2}+d\theta^{2} $ and the probability measure $ d \mu = (2|\D Q|)^{-1} \sin \theta ds d\theta $, where $ |\D Q| $ is the length of $ \D Q $. 

Denote by $ \D M $ the set of all vectors $ (q,v) \in M $ such that $ \langle v,n(q)\rangle = 0 $ or $ q $ is the endpoint of a straight segment of $ \D Q $. 
Let $ \innt M = M \setminus \D M $. For technical reasons, we define $ T $ only on collisions belonging to the smooth manifold (without boundary) $ \innt M $. The billiard map $ T:\innt M \to M $ is the transformation given by $ (q,v) \mapsto (q_{1},v_{1}) $, where $ (q,v) $ and $ (q_{1},v_{1}) $ are two consecutive collisions of the particle. Let us denote by $ \snp{1} $ the union of $ \D M $ and the subset of $ \innt M $ where $ T $ is not differentiable. It is easy to see that $ \snp{1} = \D M \cup T^{-1} \D M $. From the general results of \cite{ks}, it follows that $ \snp{1} $ is a compact set consisting of finitely many smooth compact curves that can intersect each other only at their endpoints. If we define $ \snm{1} = M \setminus T(M \setminus \snp{1}) $, then $ T $ is a diffeomorphism from $ M \setminus \snp{1} $ to its image $ M \setminus \snm{1} $, and preserves the measure $ \mu $ (see e.g. \cite{cfs,ks}).
 
The billiard dynamics is time-reversible. Indeed, the involution $ J:M \to M $ defined by $ J(s,\theta) = (s,\pi-\theta) $ for every $ (s,\theta) \in M $ has the property that $ J \circ T = T^{-1} \circ J $ everywhere on $ M \setminus \snp{1} $. Most of the time, we will use the notation $ -A $ instead of $ J A $, where $ A $ is a subset of $ M $. 

For every $ n>1 $, let us define $ \snp{n} = \snp{1} \cup T^{-1} \snp{1} \cup \cdots \cup T^{-n+1} \snp{1} $ and $ \snm{n} = \snm{1} \cup T \snm{1} \cup \cdots \cup T^{n-1} \snm{1} $. By the time-reversibility of the billiard dynamics, we have $ \snm{n} = -\snp{n} $ for every $ n>0 $. Let $ \snp{\infty} = \cup_{n>0} \snp{n} $ and $ \snm{\infty} = \cup_{n>0} \snm{n} $. Then $ \w{M} = M \setminus (\snm{\infty} \cup \snp{\infty}) $ is the set where all iterates of $ T $ are defined. Clearly, $ \mu(\snp{\infty})=\mu(\snm{\infty})=0 $ and $ \mu(\w{M})=1 $.

\subsection{Invariant sets and unidirectionality} 
\label{su:inv}         

We define $ v_{*} $ to be the component of the velocity the particle along the oriented tangent of $ \gamma $ at the point where the transverse section of $ Q $ passing through the position of the particle intersects $ \gamma $. This definition makes sense because $ Q = \{q+\delta n(q): q \in \gamma \text{ and } |\delta|<\epsilon\} $, where $ n(q) $ is the unit normal vector of $ \gamma $ at $ q $, and $ 2\epsilon $ is the transverse width of the guide. Since $ \gamma $ is differentiable, it easy to see that $ v_{*} $ is a continuous function of the time, and is constant inside a circular guide and a straight guide. Therefore $ sgn(v_{*}) $ is constant. Let $ L,R,N $ be the sets consisting of collisions $ \sta \in M $ such that $ \pi/2 < \theta < \pi $ and $ 0 < \theta < \pi/2 $ and $ \theta = \pi/2 $, respectively. Clearly, $ M = L \cup R \cup N $, and from the previous considerations, it follows that $ L,R,N $ are invariant\footnote{To be precise, we should write that $ L \cap \w{M},R \cap \w{M},N \cap \w{M} $ are invariant, because only on $ \w{M} $, every iterate of $ T $ is defined.}. Sometimes, this property is called \emph{unidirectionality}.

In a forthcoming paper, we will prove that for track billiards satisfying a condition slightly stronger than the one called H in this paper (see \eqref{eq:hypot} in Section \ref{se:hyperbolicity}, for the exact formulation), the sets $ L $ and $ R $ are the only invariant sets with measure between 0 and 1. In other words, the first return map of $ T $ to each set $ L $ and $ R $ is ergodic.

\begin{remark}
	In fact, the previous statement remains valid for a billiard in a tubular neighborhood of a differentiable Jordan curve in arbitrary dimension. To see this, first note that the set $ N = \{(q,v) \in M: v_{*} = 0 \} $ is invariant. Next, suppose that $ v_{*} > 0 $ initially, and that $ v_{*} < 0 $ at some later time, i.e., the particle changes the direction of its motion. Since $ v_{*} $ is a continuous function of the time, we see that $ v_{*} = 0 $ at some moment of time, which by the previous observation implies that $ v_{*} $ is identically equal to zero, giving a contradiction.
\end{remark}
  
\subsection{Main result} 
The map $ T $ is called \emph{(nonuniformly) hyperbolic} if all its Lyapunov exponents are non-zero almost everywhere on $ \w{M} $.

\begin{definition}
	We say that a circular guide is of \emph{type A} if $ \alpha \ge \pi $ (and no conditions on $ r_{1} $ and $ r_{2} $ are imposed), and is of \emph{type B} if $ r_{2}/r_{1}<1/2 $ (and no conditions on $ \alpha $ are imposed).
\end{definition}

We now state the main result of this paper.

\begin{theorem} 
	\label{th:main}
	Consider a track $ Q $ such that each of its circular guides is either of type A or B. The billiard map $ T $ in $ Q $ is hyperbolic provided that the straight guides of $ Q $ are sufficiently long.
\end{theorem}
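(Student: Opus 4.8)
The plan is to construct a measurable cone field $\mathcal{C}(x)\subset T_xM$ that is invariant under $DT$ and \emph{eventually strictly} invariant along almost every orbit, and then to invoke the standard criterion for nonuniform hyperbolicity of billiard maps via such cone fields, whose regularity hypotheses are supplied by the singularity structure of $\snp{1}$ recorded above (cf. \cite{ks}). I would coordinatize each tangent vector by the curvature $\mathcal{B}$ of the infinitesimal orthogonal wavefront carried by the beam (equivalently, by the slope $d\theta/ds$), and recall the two elementary transformation rules: under a free flight of length $\tau$ the curvature evolves by $\mathcal{B}\mapsto \mathcal{B}/(1+\tau\mathcal{B})$, and under a reflection off a boundary component of signed curvature $\kappa$ (with $\kappa>0$ for dispersing components) at incidence angle $\phi$ measured from the normal it jumps by $\mathcal{B}\mapsto \mathcal{B}+2\kappa/\cos\phi$. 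The candidate cone, defined on the interface sections between consecutive guides, is (a suitable sharpening of) the set of divergent wavefronts $\mathcal{B}\ge 0$; note that this cone is \emph{not} preserved reflection-by-reflection at the focusing arcs, which is why the analysis must be organized guide by guide rather than collision by collision.

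Since a track is a concatenation of straight and circular guides meeting tangentially, I would place a Poincar\'e section at each interface and study the induced section-to-section maps. Inside a straight guide the boundary is flat, so the individual reflections leave $\mathcal{B}$ unchanged and the only cumulative effect is a free flight of total length equal to the traversed path: a divergent beam stays divergent with $\mathcal{B}$ pushed toward $0^{+}$, while a convergent beam ($\mathcal{B}<0$) passes through its focus once the accumulated path exceeds $1/|\mathcal{B}|$ and emerges strictly divergent. This is precisely where the hypothesis that the straight guides are sufficiently long enters: a long guide guarantees that whatever convergence was produced in the preceding circular guide is spent, so that the beam reaches the next circular guide with $\mathcal{B}>0$, and moreover the long free flight strictly contracts the large-$\mathcal{B}$ end of the cone, mapping $[0,\infty)$ into the bounded interval $[0,1/\tau)$.

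The delicate step is the passage across a circular guide, which I would treat as a single map between the two adjacent interface sections and analyze through the integrability of the annulus billiard. The conserved impact parameter (caustic radius) renders every such passage explicitly solvable, and the generalized mirror formula for beams crossing an annular region (established in the preliminary lemmas) gives the net transformation of $\mathcal{B}$ across the guide in closed form. I would then verify that, under either type A ($\alpha\ge\pi$) or type B ($r_2/r_1<1/2$), this net map sends the cone $\mathcal{B}\ge 0$ into itself and in fact sends the edge $\mathcal{B}=0$ to some $\mathcal{B}>0$. The essential point is that one cannot argue reflection-by-reflection: for thin-annulus or whispering-gallery beams the wavefront need not defocus between a focusing reflection off the outer arc and the next reflection off a curved wall, so the defocusing property genuinely fails locally. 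Nevertheless the closed-form passage map shows the \emph{cumulative} effect over the whole guide is cone-preserving and net-defocusing, with type A supplying enough angular extent (hence enough reflections) and type B supplying a thick enough annulus for this to hold uniformly.

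Combining the two analyses yields $DT\,\mathcal{C}(x)\subseteq \mathcal{C}(Tx)$ for almost every $x$. Strict invariance then follows because almost every orbit in $\w{M}$ is unidirectional with nonzero longitudinal velocity, hence progresses along the track and visits long straight guides infinitely often: each such passage strictly contracts the upper edge of the cone while each circular-guide crossing strictly contracts the edge $\mathcal{B}=0$, so $DT^{n(x)}\mathcal{C}(x)\subseteq \innt\,\mathcal{C}(T^{n(x)}x)$ for some $n(x)$ for a.e.\ $x$. The cone criterion then gives non-vanishing Lyapunov exponents $\mu$-almost everywhere on $\w{M}$. I expect the main obstacle to be exactly the circular-guide step in the non-defocusing regime: proving that the integrable passage map preserves and strictly contracts the cone precisely under type A or type B, handling the grazing and near-tangent beams uniformly, and checking that the exceptional directions (the invariant set $N=\{\theta=\pi/2\}$ together with orbits accumulating on the singular set $\snp{\infty}$) form a $\mu$-null set, so that the almost-everywhere conclusion is genuine.
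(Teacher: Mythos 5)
Your plan follows the same route as the paper: a cone field defined at the entrances of the circular guides, invariance checked guide-by-guide using the integrability of the annulus billiard and a fractional-linear ``passage map'' for focusing times, long straight guides to absorb the residual convergence, and Wojtkowski's cone criterion to conclude. The organization by sections at the guide interfaces, the refusal to argue reflection-by-reflection, and the observation that the defocusing mechanism fails locally inside a circular guide are all precisely the paper's points.

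The genuine gap is that the decisive quantitative step is only announced, not carried out. Everything hinges on a \emph{uniform} bound for the forward focusing time of the image cone at the exit of a circular guide (the ``focal length'' $\wt$ of the guide): without such a bound, no finite length of straight guide can guarantee that every beam leaving the guide refocuses before reaching the next circular guide, so the phrase ``sufficiently long'' has no content. The paper obtains this bound by reducing the passage to the outer-circle return map $T_1(s,\theta)=(s+2\delta(\theta),\theta)$, computing $D_xT_1^{n_1(x)}=\left(\begin{smallmatrix}1 & 2n_1(x)\delta'(\theta)\\ 0 & 1\end{smallmatrix}\right)$, and proving the key inequality $\chi(x)=2n_1(x)\delta'(\theta)<-2$ for $\theta\in(\bt,\pi-\bt)$ --- via the convexity of $\delta$ together with $\alpha\ge\pi$ for type A, and via $\delta'<1-1/r<-1$ for type B. Feeding $|\chi|>2$ into the mirror formula locates the fixed points of the fractional-linear passage map and yields $\wt\le\w{c}/(\w{c}-2)<\infty$. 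Your proposal contains no analogue of this computation, nor of the auxiliary argument needed for orbits that enter the guide through the inner circle (the paper embeds such orbits into orbits of an enlarged guide to reduce to the previous case). Until the estimate $|\chi|>2$, or an equivalent, is actually proved, the assertion that the passage map ``preserves and strictly contracts the cone precisely under type A or type B'' is the theorem itself rather than a step toward it.
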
 
              
The outer circle and the inner circle of a circular guide are focusing and dispersing curves, respectively. To our knowledge, all the recipes for designing hyperbolic billiard domains including focusing and dispersing in their boundaries require these curves to be placed sufficiently apart \cite{b1,cm,dm,m2,w2,w3}. Since for guides of type A, there is no restriction on the distance between the outer and inner circles, Theorem \ref{th:main} tells us that there do exist hyperbolic billiard domains that violate the condition on the separation between focusing and dispersing boundary components. This easily implies that the mechanism generating hyperbolicity in these billiards is not the defocusing one, which requires that after a reflection off of a focusing curve, an infinitesimal family of parallel trajectories must focus and defocus before the next collision with the boundary of the billiard table.  
However, we have to point out that circular guides are very special domains being the billiards inside them integrable. Also, note that we still need to put circular guides sufficiently far away from each other in order to obtain hyperbolicity. While writing this paper, we learned that Bussolari and Lenci also constructed hyperbolic billiards (different than track billiards) that violate the aforementioned separation condition \cite{bl}. 

\subsection{Billiard dynamics in a circular guide} 
\label{su:dyncurve}
To prove Theorem \ref{th:main}, it is essential to investigate the billiard dynamics inside a circular guide. 

Consider a circular guide with outer and inner radii $ r_{1}=1 $ and $ 0<r_{2}=r<1 $, respectively. Note that by a proper rescaling, every circular guide can be transformed into such a guide. 
Denote by $ M_{1} $ the set of all collisions $ (q,v) $ such that $ q $ belongs to the outer circle of the guide. We will focus our attention on the transformation $ T_{1} $ that maps a collision with the outer circle to the next collision with the same circle (between these collisions, there may be a collision with the inner circle). If $ \sta $ belongs to the domain of $ T_{1} $, then 
\begin{equation}  
	\label{eq:map}
	T_{1} \sta = (s+2\delta(\theta),\theta),
\end{equation}
where $ 2\delta(\theta) $ is the central angle of the sector bounded by the two consecutive collisions with the outer circle (see Fig. \ref{fig:mapout}). 

Note that between $ \sta $ and $ T_{1} \sta $, there is a collision with the inner circle of the guide if and only if $ \theta \in [\bt,\pi-\bt] $, where $ \bt=\cos^{-1} r \in (0,\pi/2) $. For $ \theta \in [0,\bt) \cup (\pi-\bt,\pi] $, it is trivial to check that $ \delta(\theta)=\theta $. For $ \theta \in [\bt,\pi-\bt] $ instead, we immediately deduce from Fig. \ref{fig:mapout} that $ \delta(\theta) = \theta - \phi(\theta) $, where $ \phi(\theta) $ is the angle of the collision with the inner circle.   
\begin{figure}[tb] 
	\begin{center}
		\includegraphics[width=5cm]{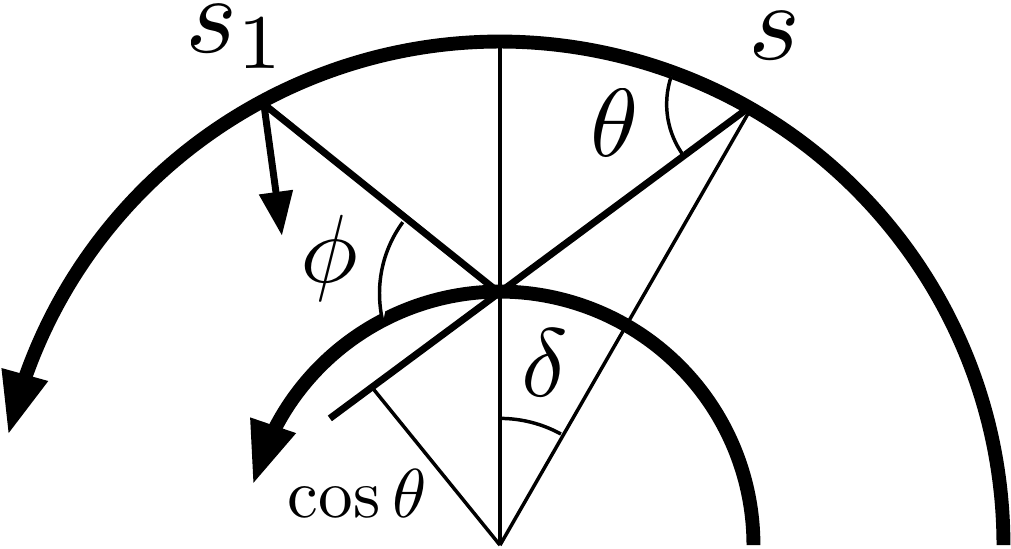}
	\end{center}
	\caption{Consecutive collisions inside a circular guide} 
	\label{fig:mapout}
\end{figure}
The relation between $ \theta $ and $ \phi $ is provided by the conservation of the angular momentum of the particle measured from the center of the circular guide, which reads as $ \cos \theta = r \cos \phi $. Putting all together, we obtain
\begin{equation}
	\label{eq:del} \delta(\theta)= 
	\begin{cases}
		\theta - \corr & \text{if } \theta \in [\bt,\pi-\bt], \\
		\theta & \text{if } \theta \in [0,\bt) \cup (\pi-\bt,\pi]. 
	\end{cases}
\end{equation}
The function $ \delta $ is differentiable on $ [0,\pi] \setminus \{\bt,\pi-\bt\} $, and $ \delta'(\theta) \to -\infty $, as $ \theta \to \bt^{+} $ or $ \theta \to (\pi-\bt)^{-} $. By abuse of notation, we define $ \delta(x) = \delta(\theta(x)) $ and $ \delta'(x) = \delta'(\theta(x)) $. 

\begin{definition} 
	For every $ x \in M_{1} $ such that $ \theta(x) \notin \{0,\bt,\pi-\bt,\pi\} $, denote by $ n_{1}(x) \ge 0 $ the times that the particle with initial state $ x $ hits the outer circle before leaving the guide.
\end{definition} 

Clearly, $ n_{1}(x) $ is finite. From \eqref{eq:map}, it then follows that
\begin{equation}
	\label{eq:der} D_{x} T^{n_{1}(x)}_{1} = 
	\begin{pmatrix}
		1 & 2 n_{1}(x) \delta'(x) \\
		0 & 1 
	\end{pmatrix}. 
\end{equation}

\subsection{Preliminary lemmas} 
\label{su:key} 
We now prove some facts that will play a crucial role in the proof the hyperbolicity of track billiards. The goal here is to estimate the quantity $ 2n_{1}(x)\delta'(x) $.

\begin{definition}
	\label{de:ent}   
	Let $ E_{1} $ be the set of all $ x \in M_{1} $ such that 
	\begin{enumerate}
		\item $ \theta(x) \notin \{0,\bt,\pi-\bt,\pi\} $, 
		\item $ x $ is an entering collision (i.e., $ n_{1}(-x)=0 $), 
		\item the particle with initial state $ x $ hits the outer circle before leaving the guide. 
	\end{enumerate}
\end{definition}

For every $ x \in E_{1} $, let us define
\begin{equation} 
	\label{eq:ome}
	\omega(x)=\alpha-2 n_{1}(x) \delta(x),
\end{equation} 
and
\begin{equation}
	\label{eq:chi} \chi(x) = 2 n_{1}(x) \delta'(x).
	\end{equation}

The next lemma is a trivial consequence of the fact that $ \delta'(x)=1 $ for all $ x \in E_{1} $ such that $ \theta(x) \in (0,\bt) \cup (\pi-\bt,\pi) $.
\begin{lemma}
	\label{le:disk}
	 If $ x \in E_{1} $ and $ \theta(x) \in (0,\bt) \cup (\pi-\bt,\pi) $, then $ \chi(x) = 2 n_{1}(x) $.
\end{lemma}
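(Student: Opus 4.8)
The plan is to reduce the claim to a single differentiation, since all of the geometric content has already been packaged into the explicit formula \eqref{eq:del} for $\delta$. First I would observe that the hypothesis $\theta(x) \in (0,\bt) \cup (\pi-\bt,\pi)$ places $\theta(x)$ squarely in the second branch of \eqref{eq:del}, on which $\delta(\theta) = \theta$. The geometric justification is exactly the case analysis preceding \eqref{eq:del}: for these angles the trajectory leaving the outer circle does not reach the inner circle before returning to the outer circle, so the chord behaves as in an ordinary circular billiard and subtends a central angle $2\theta$, whence $\delta(\theta) = \theta$.

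Next I would differentiate. On the open set $(0,\bt) \cup (\pi-\bt,\pi)$ the identity $\delta(\theta) = \theta$ gives $\delta'(\theta) = 1$, and by the abuse of notation $\delta'(x) := \delta'(\theta(x))$ fixed just after \eqref{eq:del}, we obtain $\delta'(x) = 1$ for every $x \in E_1$ whose angle lies in this range. Substituting into the definition \eqref{eq:chi} of $\chi$ yields
\[
\chi(x) = 2\, n_{1}(x)\, \delta'(x) = 2\, n_{1}(x),
\]
which is the assertion.

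There is no real obstacle here: as the sentence preceding the statement already signals, the lemma is an immediate consequence of the piecewise description of $\delta$. The only point worth flagging is a bookkeeping one, namely that the \emph{open} condition $\theta(x) \in (0,\bt) \cup (\pi-\bt,\pi)$ keeps $\theta(x)$ away from the breakpoints $\bt$ and $\pi-\bt$, where $\delta$ is not differentiable and $\delta'(\theta) \to -\infty$; this is precisely what makes the differentiation in the previous step legitimate and $\delta'(x)$ well defined.
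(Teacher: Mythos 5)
Your proposal is correct and is exactly the argument the paper has in mind: the sentence preceding the lemma already notes that it is a trivial consequence of $\delta'(x)=1$ on $(0,\bt)\cup(\pi-\bt,\pi)$, which follows from the second branch of \eqref{eq:del}, and then \eqref{eq:chi} gives $\chi(x)=2n_1(x)$. Your additional remark about staying away from the breakpoints $\bt$ and $\pi-\bt$ is a sensible (if implicit in the paper) piece of bookkeeping.
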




\begin{remark}
 	\label{re:remainder}    
	From the definition of $ \omega(x) $, it follows immediately that $ 0 \le \omega(x) < 2 \delta(x) $ for every $ x \in E_{1} $.
\end{remark} 

We now restrict our analysis to the circular guides of type A and B. 
    
\begin{lemma}
	\label{le:incr} Consider a circular guide of type A. There exists $ c=c(\alpha,r)>2 $ such that $ \chi(x) \le -c $ for every $ x \in E_{1} $ with $ \theta(x) \in (\bt,\pi-\bt) $ and $ n_{1}(x)>0 $.
\end{lemma}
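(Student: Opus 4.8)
The plan is to estimate $\chi(x)=2n_{1}(x)\delta'(x)$ from above by combining the explicit form of $\delta'$ with the pigeonhole bound on $n_{1}(x)$ coming from Remark \ref{re:remainder}. By the reflection symmetry of the circular guide, the case $\theta(x)\in(\pi/2,\pi-\bt)$ is equivalent to the case $\theta(x)\in(\bt,\pi/2)$ (the quantities $\delta,\delta',\chi$ transform in the obvious way), so I treat the latter. There the angular-momentum relation $\cos\theta=r\cos\phi$ gives $\sqrt{r^{2}-\cos^{2}\theta}=r\sin\phi$, whence
\[\delta'(\theta)=1-\frac{\sin\theta}{r\sin\phi}<0\qquad(\text{since }r<1),\]
so that $\chi(x)<0$, and Remark \ref{re:remainder} applies with $\delta(x)>0$.

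Next I would turn the constraint into an inequality for $g$. Writing $\omega(x)=\alpha-2n_{1}(x)\delta(x)\in[0,2\delta(x))$ yields $2n_{1}(x)>(\alpha-2\delta)/\delta$; multiplying by $\delta'<0$ and using $\alpha\ge\pi>2\bt>2\delta$ (note $\delta<\bt$ on the open interval) gives
\[\chi(x)<\frac{(\alpha-2\delta)\,\delta'}{\delta}=-g(\theta),\qquad g(\theta):=\frac{(\alpha-2\delta)\,|\delta'|}{\delta}.\]
A direct manipulation shows that $g(\theta)>2$ is equivalent to $\alpha>P(\theta)$, where, using $1+|\delta'|^{-1}=\sin\theta/(\sin\theta-r\sin\phi)$,
\[P(\theta):=2\delta+\frac{2\delta}{|\delta'|}=2(\theta-\phi)\,\frac{\sin\theta}{\sin\theta-r\sin\phi}.\]
Since the guide is of type A we have $\alpha\ge\pi$, so it suffices to prove the sharper, $r$-uniform bound $P(\theta)<2\bt<\pi$ on $(\bt,\pi/2)$, that is, $(\theta-\phi)\sin\theta<\bt\,(\sin\theta-r\sin\phi)$, where $\cos\bt=r$.

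The main obstacle is exactly this last trigonometric inequality in the two variables $\theta,\phi$ tied by $\cos\theta=r\cos\phi$. I would establish it by showing that $P$ is strictly decreasing on $(\bt,\pi/2)$: the endpoint values are $P(\bt^{+})=2\bt$ (since $\delta(\bt)=\bt$ and $|\delta'|\to\infty$) and $P((\pi/2)^{-})=0$ (since $\delta\to0$), so monotonicity forces $P<2\bt$ throughout. Verifying $P'<0$ reduces, after differentiating and clearing denominators, to a one-variable trigonometric inequality that I expect to collapse to a manifestly signed expression; this is the crux of the argument. Granting $P<2\bt<\pi\le\alpha$, we obtain $g(\theta)>2$ pointwise on $(\bt,\pi/2)$. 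Since $g$ is continuous there and $g(\theta)\to\infty$ as $\theta\to\bt^{+}$ or $\theta\to(\pi/2)^{-}$ (because $|\delta'|\to\infty$ at $\bt$ and $\delta\to0$ at $\pi/2$), its infimum $c=c(\alpha,r):=\inf_{\theta}g(\theta)$ is attained at an interior point and hence satisfies $c>2$. Then $\chi(x)<-g(\theta(x))\le-c$, which is the asserted bound.
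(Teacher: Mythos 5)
Your reduction of the lemma to the pointwise inequality $g(\theta)>2$, where $g(\theta)=(\alpha-2\delta)\,|\delta'|/\delta$, is correct and is in fact the same reduction the paper makes (its inequality \eqref{eq:chiest} is exactly $\chi(x)<-g(\theta)$), and your closing compactness argument (the infimum of $g$ is attained because $g\to\infty$ at both endpoints) would legitimately upgrade a pointwise bound $g>2$ to a uniform $c>2$. The problem is the step you yourself flag as the crux: the claim that $P(\theta)=2\delta+2\delta/|\delta'|$ is strictly decreasing on $(\bt,\pi/2)$ and hence bounded by its left-endpoint limit $2\bt$. This is false. Take $r=0.9$, so $\bt=\cos^{-1}(0.9)\approx 0.451$ and $2\bt\approx 0.902$; at $\theta=0.6$ one computes $\phi\approx 0.410$, $\delta=\theta-\phi\approx 0.190$, $\sin\theta-r\sin\phi\approx 0.206$, hence $P(0.6)=2\delta\sin\theta/(\sin\theta-r\sin\phi)\approx 1.04>2\bt$. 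In fact $P$ rises above $2\bt$ as $\theta$ moves away from $\bt$ before eventually decreasing to $0$ at $\pi/2$, so neither the monotonicity nor the ``sharper, $r$-uniform'' bound $P<2\bt$ holds, and the trigonometric inequality you defer to is not provable.

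What is true, and all you actually need, is the weaker bound $P(\theta)<\pi\le\alpha$. The paper gets this from the strict convexity of $\delta$ (see \eqref{eq:der2}) together with $\delta(\pi/2)=0$: the tangent-line argument gives $\delta/|\delta'|=\Delta\theta<\pi/2-\theta$, and since $\delta<\theta$ this yields $P=2\delta+2\delta/|\delta'|<2\theta+(\pi-2\theta)=\pi$. (The paper also splits off a neighborhood $(\bt,\hth]$ of $\bt$, where $|\delta'|\to-\infty$ forces $\chi<-3$ outright, and runs the convexity estimate only on $(\hth,\pi/2)$; with your endpoint analysis of $g$ that splitting is not essential.) So the architecture of your proof is sound and close to the paper's, but as written it has a genuine gap at its central step, and the specific inequality you propose to close it with is false; replacing the target $P<2\bt$ by $P<\pi$ and supplying the convexity estimate repairs it.
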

\begin{proof}  
	By the symmetry of the guide, it is enough to prove the lemma for $ x \in E_{1} $ such that $ \theta(x) \in (\bt,\pi/2) $. For such values of $ x $, we have
	\begin{equation}
		\label{eq:der1}
		\delta'(\theta) = 1-\frac{\sin \theta}{\sqrt{r^{2}-\cos^{2} \theta}}<0
	\end{equation}
and
	\begin{equation}  
		\label{eq:der2}
		\delta''(\theta) = \frac{\cos \theta}{(r^{2}-\cos^{2} \theta)^{\frac{3}{2}}}(1-r^{2})>0. 
	\end{equation}
Since $ \delta'(\theta) \to -\infty $, as $ \theta \to \bt^{+} $, we can find $ \hth \in (\bt,\pi/2) $ such that $ \chi(x) < -3 $ for every $ x \in E_{1} $ with $ \theta(x) \in (\bt,\hth] $ and $ n_{1}(x)>0 $. 

We now consider the case $ x \in E_{1} $ with $ \theta(x) \in (\hth,\pi/2) $ and $ n_{1}(x)>0 $. It is trivial to see that for every $ \theta \in (\hth,\pi/2) $, we have $ \delta(\theta) = -\delta'(\theta) \Delta \theta $, where $ \Delta \theta $ is the length of the segment lying on the $ x $-axis whose endpoints are $ \theta $ and the intersection point of the tangent of $ \delta $ at $ (\theta,\delta(\theta)) $ with the $ x $-axis. Since $ \delta $ is strictly convex and $ \delta(\pi/2)=0 $, it follows that $ 0<\Delta \theta < \pi/2-\theta $ for every $ \theta \in (\hth,\pi/2) $. Hence
\begin{equation} 
	\label{eq:ratio}       
	\frac{\delta'(\theta)}{\delta(\theta)} = -\frac{1}{\Delta \theta} <-\frac{2}{\pi-2\theta} \qquad \text{for } \theta \in (\hth,\pi/2). 
\end{equation}

From \eqref{eq:ome},\eqref{eq:chi},\eqref{eq:ratio} and Remark \ref{re:remainder}, it follows that
\begin{align}
	\chi(x) & = \left(\alpha-\omega(x)\right) \frac{\delta'(\theta)}{\delta(\theta)} \notag \\ 
	& < -2\frac{\alpha-\omega(x)}{\pi-2\theta} \notag \\ 
	& < -2\frac{\alpha-2\delta(\theta)}{\pi-2\theta}
	\qquad \text{for } \theta \in (\hth,\pi/2). \label{eq:chiest}
\end{align}
	Let $ h(\alpha,\theta)=-2(\alpha-\delta(\theta))/(\pi-2\theta) $. Since $ \alpha \ge \pi $ and $ \delta(\theta)<\theta $, it is easy to see that $ \D_{\alpha}h<0 $ and $ h(\alpha,\hth) < -2 $. So $ h $ is strictly decreasing, and therefore
	\[
	\chi(x) < h(\alpha,\hth) < -2 \qquad \text{for } \theta \in (\hth,\pi/2).
	\]
	To complete the proof, set $ c=\min\{3,-h(\alpha,\hth)\} $.
\end{proof}
         
Since $ \delta' $ is strictly increasing for $ \theta \in (\bt,\pi/2) $ (see \eqref{eq:der2}), we have $ \delta'(x) < \delta'(\pi/2) = 1-1/r $ for every $ x \in M_{1} $ such that $ \theta(x) \in (\bt,\pi-\bt) $. This simple fact proves immediately the following lemma, saying that a result similar to Lemma \ref{le:incr} holds true for circular guides of type B.

\begin{lemma}
	\label{le:smallr}
	If $ r<1/2 $, then $ \chi(x) \le 2n_{1}(x)(1-1/r) < -2 $ for every $ x \in E_{1} $ such that $ \theta(x) \in (\bt,\pi-\bt) $ and $ n_{1}(x)>0 $.                                    
\end{lemma}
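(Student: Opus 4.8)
The plan is to combine the monotonicity of $\delta'$ recorded in \eqref{eq:der2} with the explicit value of $\delta'$ at $\theta=\pi/2$, and then to use the positivity of $n_{1}(x)$ together with the hypothesis $r<1/2$ to extract the numerical bound. In fact the inequality $\delta'(x)<1-1/r$ has already been established in the paragraph preceding the statement, so the argument is short.

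First I would recall that, since $\delta''(\theta)>0$ on $(\bt,\pi/2)$ by \eqref{eq:der2}, the derivative $\delta'$ is strictly increasing there; evaluating \eqref{eq:der1} at $\theta=\pi/2$, where $\sin\theta=1$ and $\cos\theta=0$, gives $\delta'(\pi/2)=1-1/r$. By the symmetry of the guide under $\theta\mapsto\pi-\theta$ — the same reduction used in the proof of Lemma \ref{le:incr} — this bound also controls the interval $(\pi/2,\pi-\bt)$, so that $\delta'(x)\le 1-1/r$ for every $x\in E_{1}$ with $\theta(x)\in(\bt,\pi-\bt)$, with equality only at $\theta(x)=\pi/2$.

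Next I would observe that $1-1/r<0$ for $r<1$ and that $2n_{1}(x)>0$ under the assumption $n_{1}(x)>0$; multiplying the previous inequality by $2n_{1}(x)$ and invoking the definition \eqref{eq:chi} of $\chi$ yields $\chi(x)=2n_{1}(x)\delta'(x)\le 2n_{1}(x)(1-1/r)$, which is the first asserted inequality. Finally, since $n_{1}(x)$ is a positive integer we have $n_{1}(x)\ge 1$, and because $1-1/r<0$ this gives $2n_{1}(x)(1-1/r)\le 2(1-1/r)=2-2/r$; the hypothesis $r<1/2$ forces $2/r>4$, whence $2-2/r<-2$. Chaining these estimates produces $\chi(x)<-2$, completing the proof.

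There is no genuine obstacle here: the only steps requiring any care are the reduction by symmetry to $\theta(x)\in(\bt,\pi/2)$ and the treatment of the endpoint value $\theta=\pi/2$ (which is exactly where equality can occur and hence why the first inequality is stated with $\le$). Everything else is a direct substitution, so the result follows essentially immediately from the monotonicity of $\delta'$ already recorded.
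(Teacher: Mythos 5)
Your proof is correct and follows essentially the same route as the paper, which derives the lemma as an immediate consequence of the strict monotonicity of $\delta'$ on $(\bt,\pi/2)$ and the value $\delta'(\pi/2)=1-1/r$, exactly as you do. Your handling of the equality case at $\theta=\pi/2$ is in fact slightly more careful than the paper's one-line remark, but the substance is identical.
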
 

\begin{remark}
It is precisely the fact that $ |\chi(x)|>2 $ proved in the previous lemmas that allows us to think of circular guides as optical devices having the property of focusing in a controlled way infinitesimal families of parallel rays entering the guide. In this sense, we can think of circular guides of type A and B as some sort of generalized absolutely focusing curves \cite{b2,do}.
This is the main ingredient of the construction of hyperbolic track billiards, and its proof will be completed in the next section.
\end{remark}

%
%
%
%
%
\section{Hyperbolicity} \label{se:hyperbolicity}

In this section, we prove that, under proper conditions concerning the circular guides and the distance between them, a track billiard admits an eventually strictly invariant cone field. By a well known result of Wojtkowski \cite{w2}, this property implies Theorem \ref{th:main}. 

\subsection{Focusing times} Recall that $ M $ is the phase space of the billiard in the track $ Q $. Given a tangent vector $ u \in T_{x} M $ at $ x \in \innt M $, let $ s \mapsto \gamma(s)=(q(s),v(s)) \in \innt M $ be a differentiable curve such that $ \gamma(0)=x $ and $ \gamma'(0)=u $. Next, define a family of rays $ s \mapsto \gamma_{+}(s) $ by setting $ \gamma_{+}(s)=\{q(s)+v(s):t \ge 0\} $. Similarly, define a second family of rays $ s \mapsto \gamma_{-}(s) $ by replacing $ \gamma $ with $ -\gamma $ in the definition of $ \gamma_{+} $. In geometrical terms, $ \gamma_{-} $ is obtained from $ \gamma_{+} $ by reflecting its rays at $ \D Q $. All the rays of $ \gamma_{+}(\gamma_{-}) $ intersect in linear approximation at one point along the ray $ \gamma_{+}(0)(\gamma_{-}(0)) $, which are called the \emph{focal points of $ u $}. If $ x=(s,\theta) $ and $ u=(ds,d\theta) $, then the distances between $ \pi(x) $ and the focal points of $ u $ lying on $ \gamma_{+}(0) $ and $ \gamma_{-}(0) $ are, respectively, given by 

\begin{equation}
	\label{eq:ftp} \fp(u) = \frac{\sin \theta}{\kappa(s)+ m(u)} 
\end{equation}
and 
\begin{equation}
	\label{eq:ftn} \fm(u) = \frac{\sin \theta}{\kappa(s)- m(u)}, 
\end{equation}
where $ \kappa(s) $ is the curvature $ \D Q $ at $ s $ and $ m(u)=d\theta/ds $ (see for example, \cite{w2}). We conventionally assume that the curvature of the outer circle is positive, whereas the curvature of the inner circle is negative. The distances $ \fp(u) $ and $ \fm(u) $ are called \emph{forward} and \emph{backward focusing times} of $ u $.
By summing the reciprocals of $ \fp(u) $ and $ \fm(u) $, we obtain the well known \emph{Mirror Formula}\footnote{The convention on the signs of the focusing times adopted here is different than that used in \cite{w2}.}
\begin{equation}
	\label{eq:mirro} \frac{1}{\fp(u)}+\frac{1}{\fm(u)}=\frac{2 \kappa(s)}{\sin \theta}. 
\end{equation}


\subsection{Fractional linear transformation} 

\begin{definition}
	Let $ E $ be the set of all collisions $ x \in M \setminus \snp{1} $ entering a circular guide of $ Q $. Also, for every $ x \in E $, denote by $ n(x) \ge 0 $ the times that the particle with initial state $ x $ hits the boundary of the circular guide before leaving it.
\end{definition}

Following \cite{w3}, we now introduce a transformation describing the relation between the focusing times of an infinitesimal family of billiard trajectories at the entrance and at the exit of a circular guide. 

Let $ x \in E $, and consider $ u \in \tax M $ with $ u \neq 0 $. Next, denote by $ \fx $ the map from the real projective line $ \R \cup \{\infty\} $ to itself given by $ \fin \mapsto \fo $, where $ \fin=\fm(u) $ and $ \fo=\fp(D_{x}T^{n(x)}u) $. Using the Mirror Formula, one can deduce that $ \fx $ is a linear fractional transformation (restricted to $ \R \cup \{\infty\} $)
\[
\fx(\fin)=\frac{a\fin + b}{c\fin + d},
\]
where $ a,b,c,d $ are real numbers such that
\[
ad-bc<0.
\]
This inequality is equivalent to the fact that the derivative $ \fx' $ is negative on $ \R $, and implies that the transformation $ \fx $ has two fixed points $ \tau_{1}(x) $ and $ \tau_{2}(x) $ on the real line. We will always assume that $ \tau_{1}(x) \ge \tau_{2}(x) $. 

The following lemma is an immediate consequence of the properties of $ \fx $.

\begin{lemma}
	\label{le:mfgeneral}
	Let $ x \in E $, and consider $ u \in \tax M $ with $ u \neq 0 $. Then
	\[
	 \fm(u)<\tau_{2}(x) \text{ or } \fm(u)>\tau_{1}(x) \Longleftrightarrow  
	\tau_{2}(x) < \fp(\dx T^{n(x)}u) < \tau_{1}(x).
	\]
\end{lemma}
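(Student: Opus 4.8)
The plan is to recognize that, once the geometric meaning of $\fin=\fm(u)$ and $\fo=\fp(\dx T^{n(x)}u)=\fx(\fin)$ has been packaged into the relation $\fo=\fx(\fin)$, the assertion is a purely one-dimensional statement about the linear fractional transformation $\fx$ established just before the lemma. Concretely, I would forget billiards entirely and prove the following: if $\fx$ is a real Möbius transformation of $\R\cup\{\infty\}$ with $\fx'<0$ on $\R$ and two real fixed points $\tau_{1}(x)\ge\tau_{2}(x)$ (which I may take to be distinct and finite, since the hypothesis $ad-bc<0$ forces $\fx'<0$, hence $\tau_1\neq\tau_2$, and the existence of two \emph{finite} fixed points forces $c\neq0$), then $\fx$ interchanges the open interval $(\tau_2(x),\tau_1(x))$ with its complement in $\R\cup\{\infty\}$. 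The lemma is exactly the contrapositive-friendly reformulation of this interchange: $\fin$ lies in the complement iff $\fx(\fin)$ lies in the interval.

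The main technical device I would use is conjugation to normal form. Introduce the Möbius map $\psi(f)=\dfrac{f-\tau_{2}(x)}{f-\tau_{1}(x)}$, which sends $\tau_{2}(x)\mapsto 0$ and $\tau_{1}(x)\mapsto\infty$. Then $\psi\circ\fx\circ\psi^{-1}$ is a Möbius map fixing both $0$ and $\infty$, hence equals $w\mapsto\lambda w$ for a unique $\lambda\in\R$; equivalently
\[
\psi(\fx(f))=\lambda\,\psi(f)\qquad\text{for all }f.
\]
Since conjugation preserves the derivative at a fixed point, $\lambda=\fx'(\tau_{2}(x))$, and because $\fx'=\dfrac{ad-bc}{(cf+d)^{2}}<0$ everywhere, we get $\lambda<0$. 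The only remaining computation, which is elementary, is to track how $\psi$ distributes the two arcs determined by the fixed points: for $f\in(\tau_{2}(x),\tau_{1}(x))$ the numerator and denominator of $\psi$ have opposite signs, so $\psi$ maps this interval bijectively onto the negative half-line $(-\infty,0)$, while the complementary arc (the one passing through $\infty$, where $\psi(\infty)=1$) maps bijectively onto the positive half-line $(0,+\infty)$.

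With these two facts in hand the conclusion is immediate: multiplication by $\lambda<0$ swaps $(-\infty,0)$ and $(0,+\infty)$, so $\fx$ swaps the interior arc $(\tau_{2}(x),\tau_{1}(x))$ with its complement, and the strictness $\lambda\neq0$ guarantees that strict membership is carried to strict membership. Reading this through $\fin$ and $\fo=\fx(\fin)$ gives precisely the stated equivalence. I expect no serious obstacle here; the only points requiring care are bookkeeping at $\infty$ and at the pole $f=-d/c$ of $\fx$ (handled cleanly by working on $\R\cup\{\infty\}$ throughout rather than on $\R$), and the verification that the sign condition $ad-bc<0$ is genuinely what produces orientation reversal, hence the \emph{interchange} of the two arcs rather than their preservation. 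If one prefers to avoid conjugation, an alternative is a direct monotonicity argument: analyze the sign of $\fx(f)-f$ together with the decreasing behavior of $\fx$ on each side of its pole, but the normal-form route above is shorter and makes the role of the negative multiplier transparent.
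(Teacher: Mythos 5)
Your argument is correct and is exactly the verification the paper has in mind: the paper dismisses this lemma as ``an immediate consequence of the properties of $\fx$'' (namely $ad-bc<0$, hence $\fx'<0$ on $\R$, together with the two real fixed points $\tau_{2}(x)<\tau_{1}(x)$), and your conjugation to the normal form $w\mapsto\lambda w$ with multiplier $\lambda=\fx'(\tau_{2}(x))<0$ is the standard way to make the resulting interchange of the arc $(\tau_{2}(x),\tau_{1}(x))$ with its complement precise. No discrepancy with the paper's (unwritten) proof.
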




\begin{definition}
Given a circular guide, define 
	\[
	\wt=\sup_{x \in E} \tau_{1}(x). 
	\] 
	The number $ \wt $ is called the \emph{focal length of the guide}. 
\end{definition} 


In the next theorem, we prove that the focal length of a circular guide of type A or B is always bounded above. 

\begin{theorem} 
	\label{th:boundtime} 
	We have
	\[ \wt \le \frac{\w{c}}{\w{c}-2},
	\]                               
	where $ \w{c} = c(\alpha,r) $ with $ c(\alpha,r) $ as in Lemma \ref{le:incr} for a guide of type A, and $ \w{c} = 2(1/r-1) $ for a guide of type B.
	\end{theorem}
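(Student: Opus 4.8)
The plan is to make the fractional linear transformation $\fx$ completely explicit, read off its two fixed points, and estimate the larger one with the help of Lemmas~\ref{le:incr} and~\ref{le:smallr}.

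First I would compute $\fx$. Working with the first-return map $T_{1}$ to the outer circle, both the entering collision $x$ and the relevant exit collision lie on the outer circle, where $\kappa=1$ (recall $r_{1}=1$) and where $T_{1}$ preserves $\theta$; thus in \eqref{eq:ftn} and \eqref{eq:ftp} the factor $\sin\theta$ is common to entrance and exit, and with $m_{0}=m(u)$ and $m_{1}=m(\dx T^{n(x)}u)$ these read $\fin=\sin\theta/(1-m_{0})$ and $\fo=\sin\theta/(1+m_{1})$. By \eqref{eq:der} the derivative across the guide acts by $(ds,d\theta)\mapsto(ds+\chi(x)\,d\theta,\,d\theta)$ with $\chi(x)=2n_{1}(x)\delta'(x)$, so $m_{1}=m_{0}/(1+\chi(x)m_{0})$. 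Eliminating $m_{0},m_{1}$ gives the closed form (writing $\chi=\chi(x)$)
\[
\fx(\fin)=\frac{\sin\theta\,(1+\chi)\,\fin-\chi\sin^{2}\theta}{(2+\chi)\,\fin-\sin\theta\,(1+\chi)},
\]
whose determinant is $-\sin^{2}\theta<0$, recovering the sign condition used to define $\tau_{1}(x)\ge\tau_{2}(x)$.

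Next I would solve the fixed-point equation $\fx(\tau)=\tau$, i.e.
\[
(2+\chi)\tau^{2}-2\sin\theta\,(1+\chi)\tau+\chi\sin^{2}\theta=0 .
\]
A short computation shows that its discriminant collapses to $4\sin^{2}\theta$, so the fixed points are exactly $\tau=\sin\theta$ and $\tau=\chi\sin\theta/(2+\chi)$; in particular $\sin\theta$ is always a fixed point. For $x\in E$ whose orbit meets the inner circle (so $\theta(x)\in(\bt,\pi-\bt)$ and $n_{1}(x)>0$), Lemmas~\ref{le:incr} and~\ref{le:smallr} give $\chi(x)\le-\w{c}$ with $\w{c}>2$; hence $2+\chi<0$ and
\[
\frac{\chi\sin\theta}{2+\chi}=\frac{|\chi|\sin\theta}{|\chi|-2}>\sin\theta,
\]
so that $\tau_{1}(x)=|\chi(x)|\sin\theta(x)/(|\chi(x)|-2)$. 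For the remaining $x$—those with $\theta(x)\notin(\bt,\pi-\bt)$, where $\chi(x)\ge0$ by Lemma~\ref{le:disk}, or with no interior collision—the larger fixed point is $\sin\theta\le1$, and since $\w{c}/(\w{c}-2)>1$ the final bound holds for them a fortiori.

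Finally I would estimate uniformly. The function $t\mapsto t/(t-2)$ is strictly decreasing on $(2,\infty)$ and $|\chi(x)|\ge\w{c}$, while $\sin\theta(x)\le1$; therefore
\[
\tau_{1}(x)=\frac{|\chi(x)|}{|\chi(x)|-2}\,\sin\theta(x)\le\frac{\w{c}}{\w{c}-2}
\]
for every $x\in E$, and taking the supremum yields $\wt\le\w{c}/(\w{c}-2)$. The only delicate step is the first one: one must verify that across the guide the derivative contributes precisely the factor $\chi(x)$ of \eqref{eq:der}, and that entrance and exit sit on the outer circle so that $\kappa$ and $\sin\theta$ are shared by $\fin$ and $\fo$. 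Once the closed form of $\fx$ is secured, the factorization of the fixed-point quadratic and the monotonicity estimate are routine.
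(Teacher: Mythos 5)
Your computation of $\fx$ on the first--return map to the outer circle is correct and coincides with the paper's: the closed form of the Möbius transformation, the collapse of the discriminant to $4\sin^{2}\theta$, the two fixed points $\sin\theta$ and $\chi\sin\theta/(2+\chi)$, and the monotonicity of $t\mapsto t/(t-2)$ all reproduce exactly what the paper does (it works with $m$ rather than with $f$, but the identification of the fixed points and the bound $\sup_{E_{1}}\tau_{1}\le\w c/(\w c-2)$ are the same). However, this only treats the set $E_{1}$ of entering collisions that land on the outer circle \emph{and} whose last reflection inside the guide is again on the outer circle, so that $\kappa=1$ and $\sin\theta$ are shared by $\fin$ and $\fo$ and the traversal is exactly $T_{1}^{n_{1}(x)}$ with the unipotent derivative of \eqref{eq:der}. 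The supremum defining $\wt$ runs over all of $E$, and $E\setminus E_{1}$ is nonempty: a particle can enter the guide by first striking the inner (dispersing) circle, or can enter on the outer circle but leave after a final reflection off the inner circle. For such $x$ the entrance or exit focusing times carry the curvature $-1/r$ and the angle $\phi$ of the inner circle, the transformation $\fx$ is a different Möbius map, and its larger fixed point is not $\sin\theta$; your catch--all clause ``or with no interior collision --- the larger fixed point is $\sin\theta\le 1$'' does not cover these points and is not justified for them.

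This is precisely where the paper spends the second half of its proof: for $x\in E\setminus E_{1}$ it embeds the orbit $\{x,\dots,T^{n(x)}x\}$ into an orbit $\{y,\dots,T^{n(y)}y\}$ of a guide with enlarged central angle, with $y\in E_{1}$, and then shows by a contradiction argument (using Lemma \ref{le:mfgeneral} and the Mirror Formula to compare $\fm(w)$ and $\fp(\dy T^{n_{1}(y)-1}w)$ with $\tau_{2}(y)$ and the chord length $d(\pi(x),\pi(y))$) that $\tau_{1}(x)\le\tau_{1}(y)$; a final monotonicity argument in the angle $\beta\ge\alpha$ then transfers the bound from the enlarged guide back to the original one. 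Without this step (or some substitute for it) your argument establishes only $\sup_{x\in E_{1}}\tau_{1}(x)\le\w c/(\w c-2)$, not the stated bound on $\wt=\sup_{x\in E}\tau_{1}(x)$, so the proposal has a genuine gap.
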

\begin{proof}
We redefine $ M_{1} $ to be the set all collisions $ x \in M $ such that $ \pi(x) $ belongs to the outer circle of a circular guide of the track $ Q $, and $ E_{1} $ to be the set of all collisions $ x \in E \cap M_{1} $ such that the particle with initial state $ x $ hits the outer circle of the circular guide before leaving it. 

Let first assume that $ x = \sta \in E_{1} $, and compute the fixed points of the transformation $ \fx $, i.e., the solutions $ f \in \R \cup \{\infty\} $ of the equation 
\begin{equation}
\label{eq:fixed}
\fx(f)=f.
\end{equation}
If $ u \in \tax M $ with $ u \neq 0 $, then $ f=\fm(u)=\sin \theta/(1+m(u)) $ and $ \fx(f)=\fp(\dx T^{n_{1}(x)}u)=\sin \theta/(1+m(\dx T^{n_{1}(x)}u)) $. From \eqref{eq:der2}, it follows that $ m(\dx T^{n_{1}(x)}u)=m(u)/(1+\chi(x) m(u)) $, and so \eqref{eq:fixed} becomes 
\begin{equation*}
	\frac{m(u)}{1+\chi(x)m(u)} = - m(u),
\end{equation*}  
whose solutions are given by
\[
m(u)=0 \qquad \text{and} \qquad m(u)=-\frac{2}{\chi(x)}.
\]
From Lemmas \ref{le:disk},\ref{le:incr} and \ref{le:smallr}, we know that $ \chi(x)>0 $ if $ \theta \in (0,\bt) \cup (\pi-\bt,\pi) $, and $ \chi(x)<-2 $ if $ \theta \in (\bt,\pi-\bt) $ so that
\begin{equation*}
	\tau_{1}(x) = 
	\begin{cases}
		\sin \theta & \text{if } \theta \in (0,\bt) \cup (\pi-\bt,\pi), \\
		\frac{\sin \theta}{1+2/\chi(x)} & \text{if } \theta \in (\bt,\pi-\bt),
	\end{cases}
\end{equation*} 
and
\begin{equation*}
	\tau_{2}(x) = 
	\begin{cases}
	    \frac{\sin \theta}{1+2/\chi(x)} & \text{if } \theta \in (0,\bt) \cup (\pi-\bt,\pi), \\
	    \sin \theta & \text{if } \theta \in (\bt,\pi-\bt).
	\end{cases}
\end{equation*}
If $ \w{c} $ is defined as in the statement of the theorem, then by Lemmas \ref{le:disk},\ref{le:incr} and \ref{le:smallr}, we immediately obtain 
\begin{align*}
\sup_{x \in E_{1}} \tau_{1}(x) & \le \sup_{x \in E_{1}} \frac{\chi(x)}{\chi(x)+2} \\ & \le \frac{\w{c}}{\w{c}-2}. 
\end{align*}

Suppose now that $ x \in E \setminus E_{1} $. By increasing the central angle $ \alpha $ of the circular guide, we can always embed the orbit $ \{x,\ldots,T^{n(x)}x\} $ into an orbit $ \{y,\ldots,T^{n(y)}y\} $ of the enlarged guide\footnote{The argument presented here makes sense even when the enlarged guide has center angle greater or equal to $ 2 \pi $.} such that $ y \in E_{1} $ and $ n(x) \le n(y) \le n(x)+2 $. It follows that $ Ty=x \notin M_{1} $ or $ T^{n_{1}(y)-1}y=T^{n(x)}x $. Because of the symmetry of the problem, we can assume without a loss of generality that $ Ty=x $. We argue by contradiction, and suppose that 
\begin{equation}
	\label{eq:hypot} 
	\tau_{1}(x)>\tau_{1}(y). 
\end{equation}	
	Let $ u \in T_{y}M $ such that $ \fm(u)=\tau_{1}(x) $. Also, set $ w=\dx T^{-1}u $. Since $ \tau_{1}(x) $ is a fixed point of $ \fx $, we have 
	\begin{equation*}
		\tau_{1}(x) =
		\begin{cases}
			\fp(\dy T^{n_{1}(y)}w) & \text{if } T^{n(x)}x=T^{n_{1}y}y, \\
			\fp(\dy T^{n_{1}(y)-1}w) & \text{otherwise}.
		\end{cases}
	\end{equation*}
Using the Mirror Formula and the fact that $ \tau_{1}(y) $ is greater than the Euclidean distance $ d(\pi(x),\pi(y)) $ in $ \R^{2} $ between the points $ \pi(x) $ and $ \pi(y) $, we can easily show that $ 0<\fm(w)<\sin \theta = \tau_{2}(y) $, where $ y=\sta $. By Lemma \ref{le:mfgeneral}, it follows that
\begin{equation}
	\label{eq:uno}
	\tau_{2}(y)<\fp(\dx T^{n_{1}(y)}w)<\tau_{1}(y),
\end{equation}
and, using the Mirror Formula, 
\begin{equation}
	\label{eq:due}
-\sin \theta < \fp(\dx T^{n_{1}(y)-1}w) < d(\pi(x),\pi(y)) < \tau_{2}(y) < \tau_{1}(y).	
\end{equation}    
From \eqref{eq:uno} and \eqref{eq:due}, we then see that $ \tau_{1}(x)<\tau_{1}(y) $, contradicting our assumption \eqref{eq:hypot}. Hence, if we write $ \tau_{1}(x;\alpha) $ in place of $ \tau_{1}(x) $ to emphasize the dependence of $ \tau_{1} $ from the angle $ \alpha $, then, using the results obtained earlier for $ y \in E_{1} $, we get
\begin{align*} 
	\sup_{x \in E \setminus E_{1}} \tau_{1}(x;\alpha) & \le \sup_{\beta \ge \alpha} \sup_{y \in E_{1}} \tau_{1}(y;\beta) \\ 
	& \le \sup_{\beta \ge \alpha} \frac{\w{c}(\beta,r)}{\w{c}(\beta,r)-2}.
\end{align*} 
Next, note that the function $ \w{c} $ is increasing in $ \beta $ (because so is $ -h $; see the proof of Lemma \ref{le:incr}) for a guide of type A, and is independent of $ \beta $ for a guide of type B. Finally, observe that $ z/(z-2) $ is decreasing as a function of $ z<-2 $. Thus, we can conclude that  
\begin{equation*} 
	\sup_{x \in E \setminus E_{1}} \tau_{1}(x;\alpha) \le  \frac{\w{c}(\alpha,r)}{\w{c}(\alpha,r)-2},
\end{equation*}
which completes the proof.
\end{proof}

\subsection{Cone fields}  
\label{su:cf}   
A \emph{cone} in a 2-dimensional space $ V $ is a subset 
\[ \cf = \{aX_{1}+bX_{2}: ab \ge 0\}, \] where $ X_{1} $ and $ X_{2} $ are two linear independent vectors of $ V $. Equivalently, we can say that the cone $ \cf $ is a closed interval of the projective space $ \pr(V) $, the space of the lines in $ V $. The interior of $ \cf $ is defined by $ \innt \cf = \{aX_{1}+bX_{2}:ab>0\} $. Since the backward focusing time $ \fm $ and the forward focusing time $ \fp $ are both projective coordinates of $ \pr(\tax M)$, the set $ \cf = \{u \in \tax M: \fm(u)(\fp(u)) \in I \} $ is a cone in $ \tax M $ for every closed interval $ I \subset \R $. 
                            
Let $ \Lambda $ be a subset of $ \w{M} $ such that $ \mu(\Lambda)>0 $. Denote by $ \tl: \Lambda \to \Lambda $ the first return map on $ \Lambda $ induced by the billiard map $ T $. Also, denote by $ \mu_{\Lambda} $ the probability measure on $ \Lambda $ obtained by normalizing the restriction of $ \mu $ to $ \Lambda $. It is well known that the map $ \tl $ preserves $ \ml $.

\begin{definition}
A \emph{measurable cone field} $ \cf $ on $ \Lambda $ is a measurable map that associates to each $ x \in \Lambda $ a cone $ \cf(x) \subset \tax M $. We say that $ \cf $ is \emph{eventually strictly invariant} if for every $ x \in \Lambda $, we have 
\begin{enumerate}
	\item $ \dx \tl \cf(x) \subset \cf(\tl x) $,
	\item $ \exists $ an integer $ k(x)>0 $ such that $ \dx \tl^{k(x)} \cf(x) \subset \innt \cf(\tl^{k(x)}x) \cup \{0\} $.
\end{enumerate}
\end{definition}

\begin{remark}
	\label{re:induced}
	By \cite{w2}, the existence of such a cone field (plus other properties, always satisfied by track billiards) implies that $ \tl $ is hyperbolic. Furthermore, if the set $ \cup_{k \in \Z} T^{k} \Lambda $ has full $ \mu $-measure, then it is not difficult to see that $ T $ is hyperbolic as well (see \cite{w1}).
\end{remark} 

We now define an invariant cone field for circular track billiards. In the next subsection, we will show, relying on Lemmas \ref{le:incr}, \ref{le:smallr} and Theorem \ref{th:boundtime}, that this cone field is eventually strictly invariant if the straight guides of a track are sufficiently large. 

Let $ \w{E} = E \cap \w{M} $ be the set of entering collisions with infinite positive and negative semi-orbits. We define a measurable cone field on $ \w{E} $ as follows 
\begin{equation}
	\label{eq:cone} 
	\cf(x)=\{u \in T_{x} M: \fm(u) \ge \wt(x) \} \qquad \text{for all } x \in \w{E},
\end{equation}
where $ \wt(x) $ is the focal length of the circular guide containing $ \pi(x) $. The cone field  $ \cf $ is continuous (and therefore measurable), because so is $ \wt(x) $.


\subsection{Cone fields and integrability}
This subsection is intended to provide a more direct description of the construction of the cone field $ \cf $ in \eqref{eq:cone}, and to clarify the role  of the integrability of the billiard dynamics inside circular guides in this construction. Let $ T_{1} $ and $ E_{1} $ be, respectively, the tranformation and the set of entering collisions as in Subsection \ref{su:dyncurve}. We recall that $ E_{1} $ consists of all entering collisions $ x \in E $ such that $ x $ and the last collision of the orbit of $ x $ with the circular guide belong to $ M_{1} $. We will restrict the following analysis to the set $ E_{1} $, since the basic idea behind the construction of $ \cf $ remains the same on $ E \setminus E_{1} $.


We start by observing that there exists a natural cone field $ C $ that is invariant along the orbits of $ T_{1} $. This is given by
\begin{equation}
	\label{eq:cancone}
	C(x) = 
	\begin{cases}
		\left\{a \frac{\D}{\D s} + b \frac{\D}{\D \theta}: ab \ge 0 \right\} & \text{if } x \in \hat{M}_{1}, \\
	    \left\{a \frac{\D}{\D s} + b \frac{\D}{\D \theta}: ab \le 0 \right\} & \text{if } x \in M_{1} \setminus \hat{M}_{1},
	\end{cases}   	
\end{equation}  
where $ \hat{M}_{1} $ is the set of all collisions $ x \in M_{1} $ such that $ \theta(x) \in (0,\bt) \cup (\pi-\bt,\pi) $. The invariance of $ C $ is a consequence of the invariance of $ \D_{s} $ (which in turn is a consequence of invariance of the angular momentum of the particle inside a circular guide) and the twist $ 2 \delta' $ of $ T_{1} $, which is responsible for tilting the `vertical' vector $ \D_{\theta} $ to the right or to the left according to the twist's sign. Note that $ \delta' > 0 $ on $ \hat{M}_{1} $ and $ \delta'<0 $ on $ M_{1} \setminus \hat{M}_{1} $. 

The cone field $ \cf $ is obtained by modifying properly the cones of $ C $. Here properly means that after such a modification, the new cone field $ \cf $ must have the property, which we will call (*), that there are two real numbers $ \tau_{-} $ and $ \tau_{+} $ such that for every $ x \in E_{1} $, 
\begin{enumerate}[(i)]
	\item $ [\tau_{-},+\infty] \subset \fm(\cf(x)) $,
	\item $ \fp\left(\dx T^{n_{1}(x)}_{1} \cf(x)\right) \subset [-\infty,\tau_{+}] $. 
\end{enumerate} 
These conditions mean that each cone $ \cf(x) $ must consist of tangent vectors (corresponding to infinitesimal families of billiard orbits) such that their backward focusing time varies between $ \tau_{-} $ and $ +\infty $ at the entrance of the guide, and their forward focusing time varies between $ -\infty $ and $ \tau_{+} $ at the exit of the guide. We point out that focusing curves having an invariant cone field with this property (for dispersing curves, such a cone field always exists) play a crucial role in designing hyperbolic billiards \cite{b2,do,m2,w2,w3}. Indeed, once we have selected some of these special curves, to obtain a hyperbolic billiard domain, all that we need to do is to arrange them, maybe using some straight lines, so that there is sufficient distance between any pair of them. This recipe remains valid if boundary components are replaced by circular guides with an invariant cone field that has Property (*). 

It is easy to check that $ C $ enjoys Property (*) on $ \hat{M}_{1} $. In fact, using Formulae \eqref{eq:ftp} and \eqref{eq:ftn}, we obtain 
\[ \fm(C(x)) = [-\infty,0] \cup [\sin \theta(x),+\infty] \] and 
\[ \fp\left(\dx T^{n_{1}(x)}_{1} C(x)\right) \subset \fp(C(x)) = [0,\sin \theta(x)] \] for every $ x \in \hat{M}_{1} $ (as in Subsection \ref{su:dyncurve}, we are assuming that the radius of the outer circle is equal to one).
Property (*) is however not satisfies by $ C $ on $ M_{1} \setminus \hat{M}_{1} $. More precisely, while part (ii) of (*) holds true, because Lemma \ref{le:incr} (for circular guides of type A) and Lemma \ref{le:smallr} (for circular guides of type B) imply that for every $ x \in M_{1} \setminus \hat{M}_{1} $,                                   
\[ -1/2 < m(\dx T^{n_{1}(x)}_{1} \D_{\theta}) < 0, \] and so \[ \fp\left(\dx T^{n_{1}(x)}_{1} C(x)\right) \subset [\sin \theta(x),2 \sin \theta(x)], \] part (i) is not satisfied, because \[ +\infty \notin \fm(C(x)) = [0,\sin \theta(x)] \qquad \text{for all } x \in M_{1} \setminus \hat{M}_{1}. \]
This problem can be easily solved by replacing the vertical edge $ \D_{\theta} $ of $ C(x) $ with a vector $ X(x) \in \tax M_{1} $ such that $ 0<m(X(x))<1 $. The vector $ X(x) $ has to be chosen so that part (ii) of (*) remains valid, being the new cones wider than the old ones. More precisely, we have to show that there exist a real number $ c>0 $ and a vector $ X(x) \in \tax M_{1} $ such that for every $ x \in E_{1} \setminus \hat{M}_{1} $,
\[ 1-c < m(X(x)) < 1 \qquad \text{and} \qquad m\left(\dx T^{n_{1}(x)}_{1} X(x)\right)>-1+c. \] It is not difficult to see that this property implies both parts (i) and (ii) of (*) with some $ \tau_{-} $ and $ \tau_{+} $ less than $ 1/c $.

The existence of such $ c $ and $ X $ for $ x \in E_{1} $ is proved in Lemmas \ref{le:incr} and \ref{le:smallr}. In Theorem \ref{th:boundtime}, we extends this result to all points of $ E $, and also provide a specific choice for the vector $ X $, which is determined (up to a positive scalar factor) by the relation $ \fm(X(x))=\wt(x) $ for $ x \in E $. Here $ \wt(x) $ is the focal length of the circular guide containing $ \pi(x) $. 

Finally, we remark that the second edge of the cone field in \eqref{eq:cone} is not $ \D_{s} $ as in \eqref{eq:cancone}, but $ \D_{s} + \D_{\theta} $ if $ x \in M_{1} $, and $ -\D_{s}+r^{-1} \D_{\theta} $ otherwise. This makes that cone field in \eqref{eq:cone} narrower than the $ \cf $ constructed in this subsection, but it is easy to check that Property (*) remains valid for it.

\subsection{Hyperbolicity}
Let $ Q $ be a track, and assume that its guides are ordered in such a way that the $ i $th straight guide connects the $ i $th and $ (i+1) $th circular guides. The $ (n+1) $th circular guide coincides with the first one so that there are exactly $ n $ circular guides separated by $ n $ straight guides. We also assume that each circular guide is either of type A or B. For every $ 1 \le i \le n $, let $ \w{\tau}_{i} $ and $ l_{i} $ be the focal length and the length of the $ i $th circular guides and the $ i $th straight guide, respectively. We say that such a track $ Q $ satisfies Condition H if the distance between any pair of consecutive circular guides of $ Q $ is greater than the focal length of the two circular guides, i.e.,
\begin{equation*} \label{eq:h}
	l_{i}>\wt_{i}+\wt_{i+1} \qquad \text{for each } i=1,\ldots,n. \tag{H}
\end{equation*}

We can now give the precise formulation and the proof of Theorem \ref{th:main}, the main result of this paper.

\begin{theorem} \label{th:hyper}
	Suppose that a track $ Q $ satisfies Condition H. Then the billiard map $ T $ in $ Q $ is hyperbolic.
\end{theorem}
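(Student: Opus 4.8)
The plan is to show that the cone field $\cf$ of \eqref{eq:cone}, viewed on the set $\Lambda=\w{E}$ of entering collisions with the first return map $\tl\colon\Lambda\to\Lambda$, is eventually strictly invariant, and then to quote Remark~\ref{re:induced}: Wojtkowski's theorem \cite{w2} gives hyperbolicity of $\tl$, and because (as I check at the end) the set $\cup_{k\in\Z}T^{k}\Lambda$ has full measure, hyperbolicity of $T$ follows. The geometry of $\tl$ is transparent: between two consecutive entering collisions $x$ and $\tl x$ the orbit crosses exactly one circular guide, say the $i$th, and then the $i$th straight guide, entering the $(i+1)$th circular guide at $\tl x$; hence $\wt(x)=\wt_{i}$ and $\wt(\tl x)=\wt_{i+1}$, both finite by Theorem~\ref{th:boundtime}.

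I would split $\dx\tl$ into the crossing of the circular guide followed by the passage through the straight guide. The crossing is governed by the fractional linear transformation $\fx$ of Theorem~\ref{th:boundtime}: if $u\in\cf(x)$, then $\fm(u)\ge\wt_{i}\ge\tau_{1}(x)$, so Lemma~\ref{le:mfgeneral} (with the degenerate case $\fm(u)=\tau_{1}(x)$, which is a fixed point of $\fx$) yields $\fp(\dx T^{n(x)}u)\le\tau_{1}(x)\le\wt_{i}$ for the vector at the exit collision $T^{n(x)}x$. For the straight guide I would unfold the flat reflections; having zero curvature they merely reparametrize the linear ray family, so the passage becomes free flight over the unfolded path length $d$, and $d\ge l_{i}$ since the orbit must traverse the full length of the guide. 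Under free flight the forward focal point is stationary in the plane, so the backward focusing time at $\tl x$ equals $\fm(\dx\tl\,u)=d-\fp(\dx T^{n(x)}u)$. Combining these with Condition H,
\[
\fm(\dx\tl\,u)=d-\fp(\dx T^{n(x)}u)\ \ge\ l_{i}-\wt_{i}\ >\ \wt_{i+1}=\wt(\tl x),
\]
so $\dx\tl\,u\in\innt\cf(\tl x)$. As this holds for every nonzero $u\in\cf(x)$ and every $x$, the cone field is actually strictly invariant at each step, so both clauses of eventual strict invariance hold with $k(x)=1$.

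It remains to verify the hypothesis of the second part of Remark~\ref{re:induced}, that $\cup_{k\in\Z}T^{k}\w{E}$ has full $\mu$-measure. This is where unidirectionality enters: off the zero-measure set $N=\{\theta=\pi/2\}$ the longitudinal component $v_{*}$ keeps a constant nonzero sign along each orbit, so the particle advances monotonically around the single closed track and must enter every circular guide, in fact infinitely often; thus almost every point of $\w{M}$ lies on the $T$-orbit of an entering collision. The main obstacle, in my view, is not any single step but the focal-time bookkeeping through the straight guide: fixing the sign convention so that free flight reads $\fm\mapsto d-\fp$, justifying the unfolding of the flat reflections, and confirming $d\ge l_{i}$. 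Once these are secured, the bound $\wt_{i}\le\w{c}/(\w{c}-2)$ of Theorem~\ref{th:boundtime} and Condition H make the strict inclusion essentially automatic, and the theorem follows.
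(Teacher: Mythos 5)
Your proposal follows essentially the same route as the paper's own proof: invariance of the cone field \eqref{eq:cone} across a circular guide via Lemma \ref{le:mfgeneral} and Theorem \ref{th:boundtime}, the focusing-time update through the straight guide, Condition H to get strict invariance with $k(x)=1$, and unidirectionality to show $\cup_{k\in\Z}T^{k}\w{E}$ has full measure. Your treatment of the straight guide is in fact slightly more careful than the paper's (which simply writes $\fm(\dx T_{\w{E}}u)=l-\fp(\dx T^{n(x)}u)$ without remarking that the unfolded path length is at least $l$), so the argument is complete and correct.
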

\begin{proof} 
	By Remark \ref{re:induced}, it is enough to prove that the cone field $ \cf $ defined in \eqref{eq:cone} is eventually strictly invariant, and the set $ \cup_{k \in \Z} T^{k} \w{E} $ has full $ \mu $-measure.
	
	Let $ x \in \w{E} $, and consider $ u \in \cf(x) $ with $ u \neq 0 $. By definition of $ \cf(x) $, we have $ \fm(u) > \wt(x) \ge \tau_{1}(x) $ so that Lemma \ref{le:mfgeneral} implies that $ \fp(\dx T^{n(x)}u)<\tau_{1}(x) \le \wt(x) $. Now, note that $ T^{n(x)}x $ is a collision leaving a circular guide, and that the piece of the orbit of $ x $ between $ x $ and $ T_{\w{E}}x $ crosses a straight guide of length $ l $. By Condition H, we then have $ l>\wt(x)+\wt(T_{\w{E}}x) $, and hence
\begin{align*}
	\fm(\dx T_{\w{E}} u) & = l - \fp(\dx T^{n(x)}u) \\
	& \ge l - \wt(x) \\
	& > \wt(T_{\w{E}}x).
\end{align*}
This means that $ \dx T_{\w{E}} u \in \innt \cf(T_{\w{E}}x) $, and we can conclude that $ \cf $ is eventually strictly invariant with $ k(x)=1 $ for every $ x \in \w{E} $. It is clear that $ \cup_{k \in \Z} T^{k} \w{E} = \w{M} \setminus N $ (for the definition of $ N $, see Subsection \ref{su:inv}). Since $ \mu(N)=0 $, it follows that $ \cup_{k \in \Z} T^{k} \w{E} $ has full measure.
\end{proof}    

\begin{remark}
	It is easy to check that the so called Monza billiard considered in \cite{vpr} satisfies Condition H. Note that its circular guides are of type B. Theorem \ref{th:hyper} then assures that the Monza billiard is hyperbolic.
\end{remark}

\section{3-dimensional track billiards} 
\label{se:td}  

In this section, we introduce 3-dimensional track billiards, and extend Theorem \ref{th:hyper} to them.                                                      


\begin{definition} 
	A \emph{3-dimensional cylindrical (straight) guide} $ \w{G} $ is the direct product $ G \times I $, where $ G $ is a 2-dimensional circular (straight) guide $ G $, and $ I \subset \R $ is a closed interval. Furthermore, we assume that $ G $ is of type A or B.
\end{definition}
  
\begin{definition}
We say that a domain $ \w{Q} \subset \R^{3} $ is a \emph{3-dimensional track} if there exist a differential Jordan curve $ \gamma $ in $ \R^{3} $ and a rectangle $ R $ such that the intersection of $ \w{Q} $ with the plane orthogonal to the tangent line $ \tax \gamma $ is equal to $ R $ for every $ x \in \gamma $. We further require $ \w{Q} $ to be an union of finitely many alternating cylindrical and straight guides.	
\end{definition}

An example of a 3-dimensional track is depicted in Fig. \ref{fig:bent}.
\begin{figure}[t] 
	\begin{center}
		\includegraphics[width=6cm]{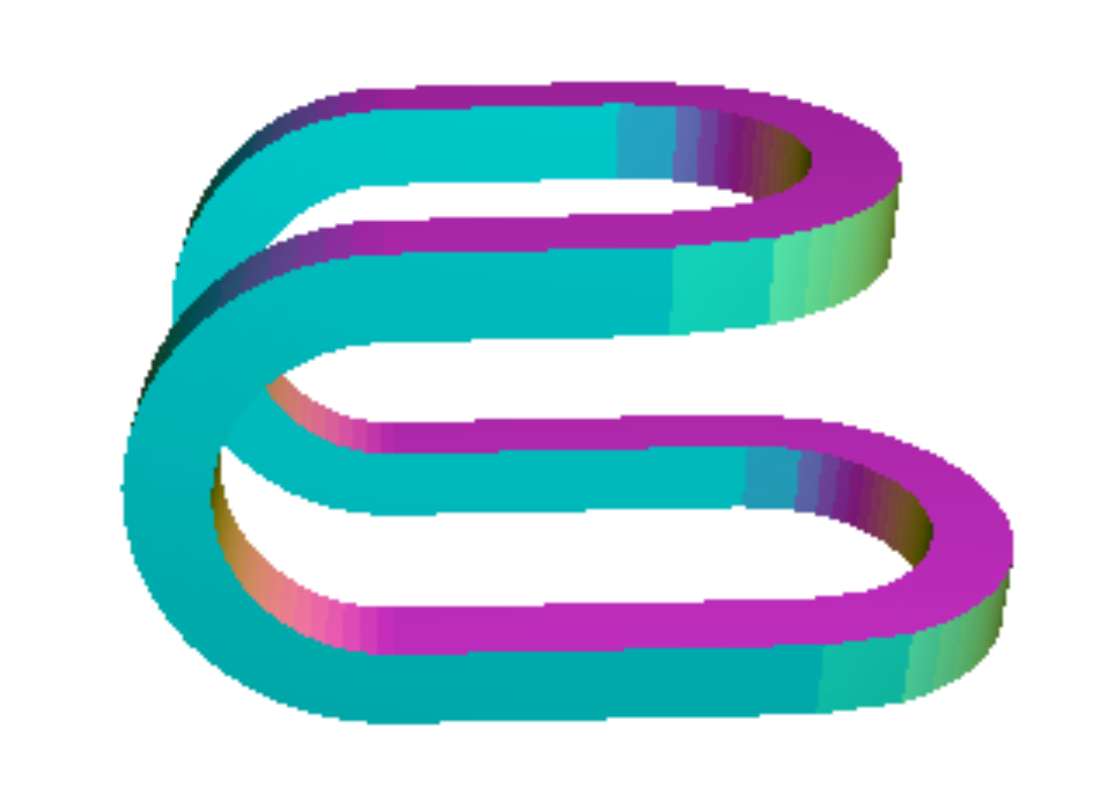}
	\end{center}
	\caption{A 3-dimensional track that satisfies Condition \~H.} 
	\label{fig:bent}
\end{figure}

\begin{remark}
	If we denote by $ v_{*} $ the orthogonal projection of the velocity of the particle along the oriented tangent of $ \gamma $, then, as for 2-dimensional track billiards, $ sgn(v_{*}) $ is constant along the trajectory of the particle. In this way, we see that the billiard phase space of a 3-dimensional track is partitioned into the three invariant sets consisting of collision states such that $ sgn(v_{*})>0 $, $ sgn(v_{*})<0 $ and $ sgn(v_{*})=0 $, respectively. 
\end{remark}  

\begin{definition}
	Let $ \w{P} $ be a subdomain of a 3-dimensional track, which consists of two cylindrical guides $ \w{G}_{1} $ and $ \w{G}_{2} $ connected by a straight guide such that circular guides $ G_{1} $ and $ G_{2} $ lie on orthogonal planes (i.e., their normals are orthogonal) of $ \R^{3} $ (Fig. \ref{fig:twist}).
\end{definition}
\begin{figure}[t] 
	\begin{center}
  	\includegraphics[width=6cm]{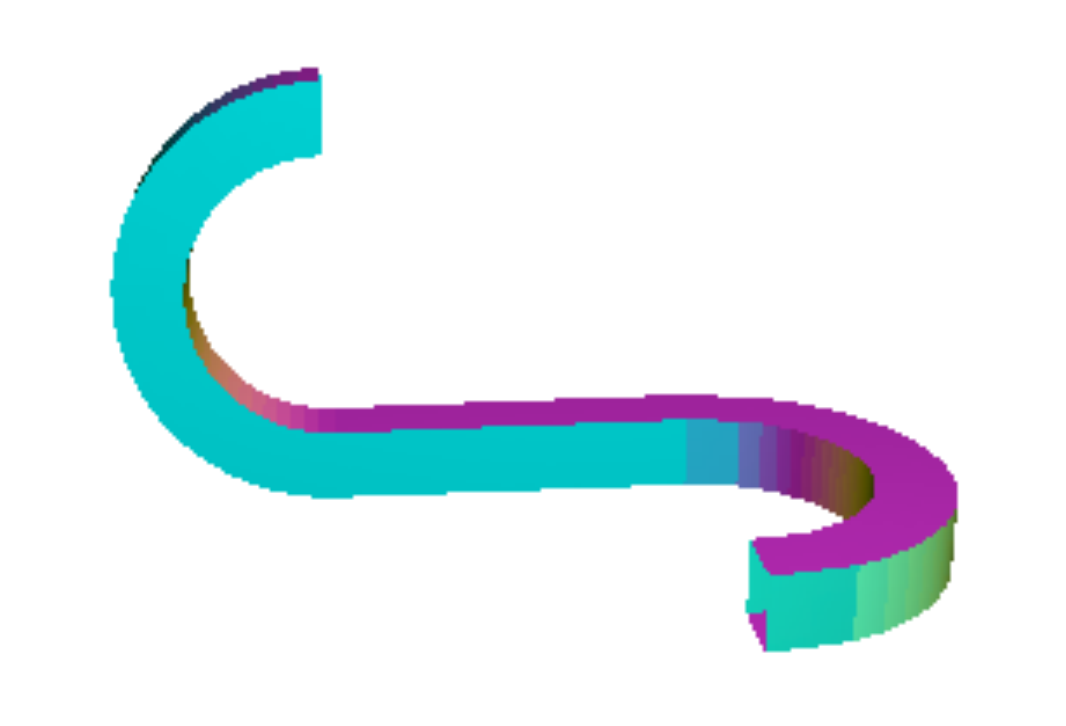}
	\end{center}
	\caption{A twisted guide} 
	\label{fig:twist}
\end{figure}

If a track does not contain a twisted guide, then all the 2-dimensional tracks are contained in a single plane. It follows that the momentum of the particle along the normal of that plane is a first integral of motion, and so the billiard is not completely hyperbolic. In the 2-dimensional case, we managed to prove that track billiards are hyperbolic if the satisfy Condition H. The 3-dimensional analogue of Condition H reads as follows. Let $ \w{Q} $ be a track such that $ \w{Q} $ is an union of finitely many cylindrical and straight guides. We say that $ \w{Q} $ satisfies Condition \~H if 
\begin{enumerate} 
	\item the distance between any two cylindrical guides $ \w{G}_{1} $ and $ \w{G}_{2} $ is greater than $ \tau_{1}+\tau_{2} $, where $ \tau_{1} $ and $ \tau_{2} $ are the focal lengths of the 2-dimensional guides corresponding to $ \w{G}_{1} $ and $ \w{G}_{2} $;
	\item $ \w{Q} $ contains at least one twisted guide.  
\end{enumerate}  

An example of track satisfying \~H is shown in Fig. \ref{fig:bent}. Billiards in tracks satisfying Condition \~H are closely related to certain hyperbolic semi-focusing cylindrical billiards \cite{bd1,bd2}, and are examples of twisted Cartesian products \cite{w3}. Theorem \ref{th:hyper} combined with the results of \cite{bd1} (or Theorem 17 of \cite{w3}) implies that for a 3-dimensional track billiard satisfying Condition \~H, there exists an invariant cone field that is strictly invariant along every orbit crossing a twisted guide, thus proving the following theorem.                                

\begin{theorem}
	 If a 3-dimensional track satisfies Condition \~H, then billiard map in such a track is hyperbolic.
\end{theorem}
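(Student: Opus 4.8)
The plan is to reduce the problem, exactly as in Remark \ref{re:induced}, to the construction of a measurable cone field on the four-dimensional phase space of the three-dimensional billiard that is eventually strictly invariant along almost every orbit; hyperbolicity then follows from Wojtkowski's criterion \cite{w2,w1}. The structural fact that makes this possible is that inside a cylindrical guide $\w{G}=G\times I$ the billiard is a direct product: the motion in the plane of the two-dimensional guide $G$ is governed by the planar track dynamics of Sections \ref{se:tracks}--\ref{se:hyperbolicity}, while the motion along $I$ is a one-dimensional bounce between the two flat faces $G\times\D I$, which neither focuses nor defocuses infinitesimal beams. Hence the transverse Jacobi equation decouples, and at every collision the tangent space splits into a \emph{planar} subspace (in the plane of $G$) and a \emph{transverse} subspace (along $I$); this splitting is preserved by the linearized dynamics within any single guide.

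First I would construct the cone field factorwise along this splitting. On the planar factor I transport the cone field $\cf$ of \eqref{eq:cone}: the first part of Condition \~H is precisely Condition H for the associated two-dimensional guides, so Theorem \ref{th:hyper} makes this factor eventually strictly invariant in the planar direction. On the transverse factor the dynamics through a single guide is the neutral one-dimensional bounce, so there I take the standard invariant cone for free motion with flat reflections, which is invariant but only neutrally so. The product of these two cones is a cone field on the whole phase space that is invariant along every orbit and strictly invariant in the planar direction, but which no single untwisted guide contracts in the transverse direction.

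The decisive step is to show that crossing a twisted guide repairs this defect. When consecutive cylindrical guides $\w{G}_1$ and $\w{G}_2$ lie in orthogonal planes, the planar direction of $\w{G}_1$ is the transverse direction of $\w{G}_2$ and vice versa; thus the strict expansion accumulated in the planar factor while the orbit traverses $\w{G}_1$ is rotated into the previously neutral transverse factor of $\w{G}_2$, while $\w{G}_2$ strictly expands its own planar factor. A single passage through a twisted guide therefore produces strict invariance in both directions at once. I would formalize this by identifying the present situation --- two symplectic factors, each carrying the eventually strictly invariant cone of the planar analysis, coupled by the nontrivial orthogonal rotation of the twist --- with the hypotheses of Theorem 17 of \cite{w3} (equivalently, of \cite{bd1}); that theorem then yields eventual strict invariance of the product cone field along every orbit crossing a twisted guide.

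Finally, the second part of Condition \~H guarantees at least one twisted guide, and since the guides of a track are arranged cyclically, every orbit with $v_*\neq 0$ passes through that guide infinitely often, while $N=\{v_*=0\}$ has zero measure. Thus the product cone field is eventually strictly invariant almost everywhere, and, the union of the iterates of the entering set having full measure, Wojtkowski's criterion gives hyperbolicity of $T$. I expect the main obstacle to lie in the decisive step: verifying rigorously that the orthogonal twist genuinely couples the two symplectic factors, so that the neutral transverse cone becomes strictly contracted --- that is, checking that our concrete product cone meets the hypotheses of the twisted-Cartesian-product theorem, rather than reproving that theorem here.
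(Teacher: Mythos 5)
Your proposal is correct and follows essentially the same route as the paper, which likewise obtains the result by combining Theorem \ref{th:hyper} for the planar factor with the twisted-Cartesian-product machinery of \cite{bd1} and Theorem 17 of \cite{w3} to show that the cone field becomes strictly invariant along every orbit crossing a twisted guide. The paper states this combination in a single sentence, so your factorwise construction of the product cone and the observation that unidirectionality forces every $v_*\neq 0$ orbit through the twisted guide infinitely often simply make explicit what the paper's citation presupposes.
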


\end{document}